\newcommand{\piecewise}[4]{\left\{ \begin{array}{c}#1,\ {\rm if}\ #2\\ #3,\ {\rm if}\ #4\end{array}\right.}
\renewcommand{\mod}[1]{{\ifmmode\text{\rm\ (mod~$#1$)}\else\discretionary{}{}{\hbox{ }}\rm(mod~$#1$)\fi}}
\newcommand{\Ctwo}[1]{D_{#1,m}}
\begin{document}

\newcounter{claim}
\setcounter{claim}{1}

\theoremstyle{plain} \newtheorem{definition}{Definition}
\theoremstyle{plain} \newtheorem{proposition}{Proposition}
\theoremstyle{remark} \newtheorem{remark}[proposition]{Remark}
\theoremstyle{plain} \newtheorem{lemma}[proposition]{Lemma}
\theoremstyle{plain} \newtheorem{theorem}[proposition]{Theorem}
\theoremstyle{plain} \newtheorem{conjecture}[proposition]{Conjecture}
\theoremstyle{plain} \newtheorem{corollary}[proposition]{Corollary}
\theoremstyle{plain} \newtheorem{claim}[proposition]{Claim}

\begin{center}
{\LARGE{Sums and Products of Distinct Sets and Distinct \\ \vspace{0.1cm} Elements in Fields of Characteristic 0}}
\end{center}

\begin{center}
{\Large {Karsten O. Chipeniuk}} \footnote{Correspondence may be directed to: \\ Email: karstenc@math.ubc.ca \\Phone: 1 604 822 3887  \\ Fax: 1 604 822 6074 \\ The author was supported by an NSERC Postgraduate Scholarship. }
\end{center}

\begin{center}
\emph{Department of Mathematics, University of British Columbia, \\1984 Mathematics Road, Vancouver, B.C. V6T 1Z2 Canada}
\end{center}

\begin{abstract} Let $A$ and $B$ be finite subsets of an algebraically closed field $K$ of characteristic 0 such that $|B|=C|A|$.  We show the following variant of the sum product phenomena: If $|AB|<\alpha|A|$ and $\alpha,C, \alpha/C\ll \log |A|$, then $|kA+lB|\gg |A|^k|B|^l$.  This is an application of a result of Evertse, Schlickewei, and Schmidt on linear equations with variables taking values in multiplicative groups of finite rank, in combination with an earlier theorem of Ruzsa about sumsets in $\mathbb{R}^d$.  As an application of the case $A=B$ we give a lower bound on $|A^+|+|A^\times|$, where $A^+$ is the set of sums of distinct elements of $A$ and $A^\times$ is the set of products of distinct elements of $A$.
\end{abstract}


\section{Introduction}
Let $A$ be a finite subset of a commutative ring $R$.  Then we can form the sumset $2A=A+A=\{a+a': a,a'\in A\}$ and the productset $A^2=A\cdot A=\{aa':a,a'\in A\}$, as well as the iterated variant $kA$ of the sumset, for $k\in\mathbb{Z}^+$.  In addition, letting $A=\{a_1,\dots, a_n\}$, we can take the set of all sums of, and the set of all products of, distinct elements of $A$, respectively

\begin{equation}
A^+ =\left\{\sum_{i=1}^n\epsilon_ia_i: \epsilon_i\in\{0,1\},\ \forall\ i=1,\dots,n\right\}
\end{equation}
and
\begin{equation}
A^\times=\left\{\prod_{i=1}^n a_i^{\epsilon_i}: \epsilon_i\in\{0,1\},\ \forall\ i=1,\dots,n\right\}
\end{equation}
We are then led to consider 
\begin{equation}
g_R(N) = \min_{A\subset R,\ |A|=N} \{|A^+|+|A^\times|\}.
\end{equation}
Such expressions were investigated by Erd{\" o}s and Szemer{\' e}di in \cite{ES1983} in the integer setting.  They showed that
\begin{equation}
g_\mathbb{Z} (N) < N^{c\frac{\log N}{\log\log N}}
\end{equation}
for some absolute constant $c>0$.  Later Chang proved (\cite{CHA2003}) their conjecture that this essentially provides the lower bound as well; more precisely, she showed that
\begin{equation}
g_\mathbb{Z} (N) > N^{(1/8 -\epsilon)\frac{\log N}{\log\log N}}.
\end{equation}

More recently, in \cite{CHA2004}, Chang addressed a question of Ruzsa, proving that $g_\mathbb{C}(N)$ grows faster than any power of $N$,
\begin{equation}
\lim_{N\to \infty} \frac{\log(g_\mathbb{C}(N))}{\log N} = \infty.
\end{equation}

In this article we will obtain an explicit lower bound for $g_K(N)$ in an algebraically closed field $K$ of characteristic 0.  In particular, since $g_{R_2}(N)\leq g_{R_1} (N)$ whenever $R_1$ is a subring of $R_2$, this bound will hold for any field of characteristic 0.  Our result in this direction is the following.

\begin{theorem}[Lower bound on $g_K(N)$]\label{lb} For any $\epsilon >0$ and any $N$ sufficiently large we have
$$
g_K(N)\geq N^{(1/264-\epsilon)\frac{\log\log(N)}{\log\log\log(N)}}.
$$
\end{theorem}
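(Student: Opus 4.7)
\emph{Plan.} I argue by contradiction. Suppose $A \subseteq K$ has $|A| = N$ and $|A^+|+|A^\times| < M := N^{(1/264-\epsilon)(\log\log N)/(\log\log\log N)}$. Since $\{aa' : a \neq a' \in A\} \subseteq A^\times$, we immediately have $|AA| \leq M + N$; however $|AA|/|A|$ may exceed $\log|A|$, so the preceding sum-product theorem does not apply to $A$ directly. The first step is therefore to pass to a subset $A' \subseteq A$ with $|A'A'| \leq \alpha|A'|$ for some $\alpha \ll \log|A'|$. To accomplish this I exploit the stronger input $|A^\times| \leq M$: the multiplicative subgroup $\Gamma = \langle A\rangle \leq K^\times$ must have torsion-free rank $r \leq \log_2 M$, because $r+1$ multiplicatively independent elements of $A$ would yield $2^{r+1}$ distinct subset products. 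Via a logarithmic embedding $\iota: \Gamma/\mathrm{tors} \hookrightarrow \mathbb{Q}^r$, set $E = \iota(A)$; then $|E^+| \leq M$ as a subset of $\mathbb{Q}^r$. Plünnecke-type refinements, combined with Ruzsa's theorem on sumsets in $\mathbb{R}^d$ alluded to in the abstract, then yield a subset $A' \subseteq A$ with $|A'| \geq N^{1-o(1)}$ and small multiplicative doubling $\alpha$.

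With such an $A'$, the preceding theorem applied to $A = B = A'$ (so $C = 1$) produces, for each positive integer $m$,
\[
|mA'| \geq |A'|^m/\Consttwo{\alpha}{m},
\]
where $\Consttwo{\alpha}{m} = \exp(\alpha^{16\alpha}(m+1))$ records the Evertse--Schlickewei--Schmidt contribution. I now pass to the restricted sumset $(mA')_{\neq}$ of $m$-fold sums of pairwise distinct elements of $A'$, which is automatically contained in $A^+$. Since at most $\binom{m}{2}|A'|^{m-1}$ ordered $m$-tuples in $(A')^m$ have a repeated coordinate, for $m^2 \Consttwo{\alpha}{m} \leq |A'|$ one gets
\[
|A^+| \geq |(mA')_{\neq}| \geq \tfrac{1}{2}\,|A'|^m/\Consttwo{\alpha}{m}.
\]

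Finally, choose $m \sim (1/264)(\log\log N)/(\log\log\log N)$ and verify that $\alpha^{16\alpha}(m+1) = o(m\log N)$ for this choice; this forces $|A^+| > M$, contradicting the initial assumption. The constant $1/264$ arises from bookkeeping of the exponent $16$ in $\alpha^{16\alpha}$ together with the losses incurred in producing $A'$. The principal obstacle in the plan is the first step: the weaker bound $|AA| \leq M + N$ is not strong enough to feed into the preceding theorem, and one must genuinely use the hypothesis that \emph{all} subset products, not just pairwise ones, lie in $A^\times$, via the rank estimate $r \leq \log_2 M$ and a sumset inequality in $\mathbb{Q}^r$. Once $A'$ is in hand, the restricted-sum transfer, the application of the main theorem, and the optimization of $m$ are essentially mechanical.
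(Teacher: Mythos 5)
Your plan has a genuine gap at exactly the point you identify as ``the principal obstacle,'' and the paper's proof does not take the route you propose.

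The step that does not go through is the passage from the hypothesis $|A^\times|\leq M$ to a large subset $A'\subseteq A$ with small multiplicative doubling $|A'A'|\leq\alpha|A'|$, $\alpha\ll\log|A'|$. You correctly deduce that the torsion-free rank of $\langle A\rangle$ is at most $\log_2 M$, and that under a logarithmic isomorphism $\nu$ the set $E=\nu(A)$ has $|E^+|\leq M$. But ``small rank plus small $E^+$'' does not yield a large subset with bounded additive doubling in any straightforward way, and you offer no mechanism for it --- ``Pl\"unnecke-type refinements combined with Ruzsa's theorem'' is not an argument. Moreover, even if such an $A'$ with $\alpha\asymp\log|A'|$ were produced, the numerics fail: Theorem \ref{sumprod} gives $|hA'|\geq e^{-h^{65h}(\alpha+1)}|A'|^h$, so the loss is $\exp(h^{65h}\alpha)$, and with $\alpha\asymp\log N$ this already exceeds $|A'|^h=e^{h\log N}$ once $h\geq 2$. (Your written constant $\exp(\alpha^{16\alpha}(m+1))$ has the roles of the two parameters swapped relative to what Theorem \ref{sumprod} actually provides, and this confusion masks the problem.) For the loss to be affordable one needs the quantity entering the exponent to be $O(\log\log N/\log\log\log N)$, not $O(\log N)$, and doubling cannot be controlled that tightly from $|A^\times|\leq M$.

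The paper avoids this difficulty entirely by working with the \emph{multiplicative dimension} $m=\dim_\times$ rather than the doubling $\alpha$: Proposition \ref{sumprodmult} gives $|hA|\geq e^{-h^{65h}(m+1)}|A|^h$, and the constant is benign precisely when $m=O(\log\log N/\log\log\log N)$. The proof then follows Chang's block decomposition: split $A$ into $\lfloor\sqrt n\rfloor$ blocks $B_i$ of size $\geq\sqrt n$. If any block has $\dim_\times(B_i)+1\leq(\tfrac{1}{132}-\epsilon_1)\frac{\log\log n}{\log\log\log n}$, then Lemma \ref{simpsum} (Stirling plus Cauchy--Schwarz applied to Proposition \ref{sumprodmult}) yields $|B_i^+|$ large, and one is done. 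Otherwise every block has large multiplicative dimension, and the proof passes to the image under $\nu$, setting $A_s=\cup_{i\leq s}B_i$ and running a dichotomy on whether $|\nu(A_s\cup B_{s+1})^+|>\rho|\nu(A_s)^+|$ for all $s$ (trivial super-polynomial growth) or fails for some $s$ (apply Pl\"unnecke's inequality, Theorem \ref{Plun}, to the resulting small-doubling relation, then bound $|(h+1)\nu(B_{s+1})^+-\nu(B_{s+1})^+|\geq h^m$ using the large dimension of $B_{s+1}$). Thus the hypothesis $|A^\times|\leq M$ is exploited via the \emph{lower} bound it forces on block dimensions, not by extracting a subset of small multiplicative doubling. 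Your proposal collapses this two-pronged dichotomy into a single reduction that the hypotheses do not support.
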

The proof largely follows that of \cite{CHA2003}.  This approach uses a manifestation of the sum-product phenomena, namely that a small productset requires a large iterated sumset.  We will use a version which holds in the field $K$.

\begin{theorem}\label{sumprod} Let $A\subset K$ be finite, and suppose that $|A^2|\leq \alpha |A|$.  Then for any integer $h\geq 2$ we have
$$
|hA|\geq e^{(-h^{65h}(\alpha+1))} |A|^h.
$$
\end{theorem}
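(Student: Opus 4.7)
Write $r_h(s)=|\{(a_1,\ldots,a_h)\in A^h : a_1+\cdots+a_h=s\}|$ and $M=\max_{s\in hA} r_h(s)$. Since $\sum_{s\in hA} r_h(s)=|A|^h$, we have $|hA|\geq |A|^h/M$, so the theorem reduces to the multiplicity upper bound $M\leq \exp(h^{65h}(\alpha+1))$. The plan is to derive this by combining a structural bound on the multiplicative group generated by $A$ with the Evertse--Schlickewei--Schmidt theorem on unit equations.

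First I would use the hypothesis $|A^2|\leq \alpha|A|$ to place $A$ inside a multiplicative subgroup of $K^\times$ of small torsion-free rank. Passing to the torsion-free quotient $K^\times/\mu$, where $\mu$ is the group of roots of unity in $K$, the image of $A$ still has small doubling. Since any finitely generated subgroup of $K^\times/\mu$ is free abelian and thus embeds in $\mathbb{R}^d$, the quoted theorem of Ruzsa about sumsets in $\mathbb{R}^d$ (applied via a logarithmic choice of embedding) produces a multiplicative subgroup $\Gamma\subset K^\times$ containing $A\cup\{-1\}$ of torsion-free rank $r\leq C\alpha$ for an absolute constant $C$.

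The next step applies the ESS theorem to bound $M$. Fix $s\in hA$ together with one chosen representation $s=b_1+\cdots+b_h$. Every other representation $s=a_1+\cdots+a_h$ furnishes a vanishing linear relation $\sum_{i=1}^h a_i+\sum_{i=1}^h(-b_i)=0$ with all $2h$ summands in $\Gamma$. The ESS theorem bounds the number of projective non-degenerate solutions of this equation in $\Gamma^{2h}$ by $\exp((12h)^{6h}(r+1))$, and because the last $h$ coordinates are pinned to $(-b_1,\ldots,-b_h)$ each projective class contains at most one affine solution; this directly bounds the non-degenerate contribution to $r_h(s)$.

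The remaining step, and the main obstacle, is to handle \emph{degenerate} solutions, those in which some proper nonempty subset of the $2h$ summands already sums to zero. Any such solution decomposes along a nontrivial partition of $\{1,\ldots,2h\}$ into strictly smaller vanishing pieces, each of which is either itself degenerate (and iterates) or non-degenerate (and is controlled by the previous bound at a smaller value of $h$). Summing over the at most $4^h$ admissible partitions and compounding the bounds by induction on $h$ yields the total multiplicity estimate. The bookkeeping is the delicate part: one must choose the exponent $65$ large enough to swallow the factor $12^{6h}$ from ESS, the factor $4^h$ from partitioning, the absolute constant $C$ from the rank reduction, and the multiplicative blow-up from iterating the degenerate case, while still keeping the final exponent within $h^{65h}(\alpha+1)$.
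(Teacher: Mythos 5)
Your very first reduction is incorrect, and the error propagates through the whole plan. You set $M=\max_{s\in hA}r_h(s)$ and observe (correctly) that $|hA|\geq |A|^h/M$, then claim the theorem reduces to the bound $M\leq\exp(h^{65h}(\alpha+1))$. But that $\ell^\infty$ bound on the representation function is \emph{false} in general. Take $A=\{\pm 2^i:0\leq i< n/2\}$, so $|A^2|\approx 2|A|$ and $\dim_\times(A)=2$ (generated by $2$ and $-1$). Then $r_2(0)=|A|=n$, and more generally $r_{2j}(0)\geq n^j$ (pair off $a$ with $-a$), so $\max_s r_h(s)\gtrsim n^{\lfloor h/2\rfloor}$. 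Plugging that into $|hA|\geq |A|^h/M$ only gives $|hA|\gtrsim |A|^{\lceil h/2\rceil}$, nowhere near $|A|^h$. The reason your degenerate-solution bookkeeping ``is the delicate part'' is that it cannot close: each vanishing sub-block $\sum_{i\in S}a_i=0$ contributes a genuine factor of roughly $n$ (one free scaling), and those factors of $n$ accumulate. The correct move --- and the one the paper makes --- is to bound the $\ell^2$ quantity $\sum_s r_h(s)^2$, i.e.\ the total number of additive $2h$-tuples, by $\exp(h^{65h}(m+1))|A|^h$, and then apply Cauchy--Schwarz in the form $|hA|\geq |A|^{2h}/\sum_s r_h(s)^2$. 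The extra $|A|^h$ on the right is precisely what absorbs the powers of $n$ coming from vanishing sub-blocks, which your $\ell^\infty$ formulation has no room for.

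Secondary issues, neither fatal on its own but worth flagging. First, your rank-reduction step proposes to pass to the torsion-free quotient $K^\times/\mu$ and apply Ruzsa's $\mathbb{R}^d$ sumset theorem via a ``logarithmic embedding''; the paper instead applies Chang's Freiman-type theorem to $A^\ast\subset K^\ast$ directly, getting $\dim_\times(A)\leq |(A^\ast)^2|/|A^\ast|\leq\alpha$, and then feeds the subgroup $G$ (which may well have torsion) straight into the ESS theorem --- ESS needs only a bound on the \emph{rank} (minimum number of generators), so the torsion quotient is unnecessary and potentially problematic if distinct elements of $A$ collide modulo roots of unity. Second, your ESS count of non-degenerate solutions to a $2h$-variable unit equation should carry the rank of $\Gamma^{2h}$, which is $2hr$, not $r$; you wrote $\exp((12h)^{6h}(r+1))$ where the paper's analogue is $\exp((6k)^{3k}(km+1))$. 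These can be fixed, but the multiplicity bound cannot.
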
  
This result was in essence also proved by Chang in \cite{CHA2006}, without the explicit dependence on $h$, which will be essential in the proof of \ref{lb}.  Using a result of Ruzsa (\cite{RUZ1994}, Theorem \ref{severaldimensions} below), we have also extended this result to distinct sets $A$ and $B$.  Namely, for $A,B\subset K$, define $AB$, $A+B$, and $kA+lB$ in the obvious manner.  Then we have

\begin{theorem}[Small productset implies large iterated sumset for distinct sets]\label{sumproddiff} Let $A,B \subset K$ with $|B|=C|A|$, and suppose that $|AB| < \alpha |A|$.  If 
$$
\max\left(\alpha,C,\frac{\alpha}{C}\right)\leq \frac{1}{(k+l)^{65(k+l)}} \log |A|
$$
then
\begin{equation}
|kA+lB| \gg_{k,l} |A|^k|B|^l.
\end{equation}
\end{theorem}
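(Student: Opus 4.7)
The natural strategy is to combine $A$ and $B$ into a single set, apply the single-set sum-product theorem (Theorem \ref{sumprod}) to that set, and then invoke Ruzsa's Theorem \ref{severaldimensions} to disentangle the iterated sumset of the union into the specific sumset $kA + lB$. Before combining, I would use the multiplicative Pl\"unnecke--Ruzsa triangle inequality applied to the hypothesis $|AB| \leq \alpha|A|$ together with $|B|=C|A|$ to bound
\[
|A\cdot A| \leq \frac{|AB|^2}{|B|} \leq \frac{\alpha^2}{C}|A|, \qquad |B\cdot B| \leq \frac{|AB|^2}{|A|} \leq \alpha^2 |A|.
\]
Setting $D = A \cup B$, these estimates combined with $D\cdot D = (A\cdot A)\cup(AB)\cup(B\cdot B)$ give $|D\cdot D| \leq \alpha'|D|$, where $\alpha'$ is polynomial in $\max(\alpha,C,\alpha/C)$ and hence controlled by the quantitative hypothesis.

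Next, I would apply Theorem \ref{sumprod} to $D$ with $h = k+l$ to obtain
\[
|(k+l)D| \geq \exp\bigl(-(k+l)^{65(k+l)}(\alpha'+1)\bigr)|D|^{k+l}.
\]
The hypothesis $\max(\alpha,C,\alpha/C)\leq \log|A|/(k+l)^{65(k+l)}$ keeps the exponent small enough that the right-hand side is of the form $\gg_{k,l}|D|^{k+l}$. Since $(k+l)D = \bigcup_{i+j=k+l}(iA+jB)$, this aggregate bound on the full iterated sumset of $D$ contains within it the information needed to control each individual $|iA+jB|$.

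The final and main step is to extract from the lower bound on $|(k+l)D|$ a matching lower bound on the specific sumset $|kA+lB|\gg_{k,l}|A|^k|B|^l$. This is where Ruzsa's Theorem \ref{severaldimensions} on sumsets in several dimensions plays its role: it provides the Pl\"unnecke-type machinery to compare sumsets with different splits $(i,j)$ satisfying $i+j=k+l$, across the torsion-free additive group of $K$, and thereby to isolate the distinguished splitting $(k,l)$. The principal obstacle is bookkeeping. One must verify that the cumulative losses from the multiplicative Pl\"unnecke--Ruzsa step, from Theorem \ref{sumprod}, and from the several-dimensional Ruzsa inequality combine to a factor that remains $\gg_{k,l}$ under the weakest admissible choice of the parameters $\alpha$, $C$, $\alpha/C$ permitted by the hypothesis. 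A minor technicality is handling the possibility $0\in A\cup B$, which can be disposed of by deleting $0$ before applying the multiplicative Pl\"unnecke--Ruzsa inequalities, at the cost of a negligible additive error in the sizes.
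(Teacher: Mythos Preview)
Your final step is a genuine gap, and it is precisely the step where the real work of the theorem lies. Knowing that $|(k+l)D|$ is large for $D=A\cup B$ tells you only that \emph{some} $|iA+jB|$ with $i+j=k+l$ is large; it gives no information about the specific split $(k,l)$. Your claim that Ruzsa's Theorem~\ref{severaldimensions} ``provides the Pl\"unnecke-type machinery to compare sumsets with different splits'' is simply a misreading of that result: Theorem~\ref{severaldimensions} is the elementary inequality $|X+Y|\geq |Y|+n|X|-\binom{n+1}{2}$ for full-dimensional sets in $\mathbb{R}^n$, and it has nothing to do with relating $|iA+jB|$ for varying $(i,j)$. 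Even if you instead route through Cauchy--Schwarz and bound the additive $2(k+l)$-tuples of $A^{2k}\times B^{2l}$ crudely by those of $D^{2(k+l)}$, you lose a factor $C^{\min(k,l)}$, which is not $O_{k,l}(1)$ under the hypothesis (where $C$ may be as large as $(k+l)^{-65(k+l)}\log|A|$). The paper says this explicitly: the $A\cup B$ approach only yields the theorem when $C$ is absolutely bounded.

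The paper's actual argument is structured quite differently. Theorem~\ref{severaldimensions} is used not on the additive side but on the \emph{multiplicative} side: one pushes $A,B$ into $\mathbb{Z}^D$ via the isomorphism of Lemma~\ref{multdim}(b) and applies Theorem~\ref{severaldimensions} to $\nu(A)+\nu(B)$ to deduce from $|AB|<\alpha|A|$ that $m=\max(\dim_\times A,\dim_\times B)$ is bounded linearly in $\alpha$ and $\alpha/C$ (Lemma~\ref{multdimbound}, with Lemma~\ref{bases} handling the degenerate range). Then one reproves the induction of Lemma~\ref{ind} while keeping track of which variables lie in $A$ and which in $B$ (Lemma~\ref{inddistinct}); this bookkeeping is what produces the correct power $C^{\lfloor l/2\rfloor}$ in the tuple count and hence the factor $|B|^l$ in the conclusion. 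A secondary issue with your outline is quantitative: your $\alpha'$ for $D\cdot D$ is quadratic in $\max(\alpha,C,\alpha/C)$, so $(k+l)^{65(k+l)}\alpha'$ is not bounded under the stated hypothesis and the exponential loss in Theorem~\ref{sumprod} is not $O_{k,l}(1)$.
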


The proofs of Theorems \ref{sumprod} and \ref{sumproddiff} rely on bounding the number of additive tuples in $A^k$ (respectively, $A^k\times B^l$); this is approached via an induction using a result of Evertse, Schlickewei, and Schmidt \cite{ESS2002} which we next describe.

Let $K^\ast = K\setminus\{0\}$ be the multiplicative subgroup of nonzero elements in $K$.  Let $\Gamma$ be a subgroup of $(K^\ast)^d$ with rank $r$ (so the minimum number of elements from which we can generate $\Gamma$ is $r$).  For coefficients $a_1,\dots,a_d\in K$ let $A(d,r)$ denote the number of solutions $(x_1,\dots,x_d)\in\Gamma$ to

$$
a_1x_1+a_2x_2+\dots+ a_dx_d = 1
$$
which are nondegenerate (namely, no proper subsum of the left side vanishes).  Note that in the following, the bound is finite and depends only on $r$ and $d$, and not on the particular group $\Gamma$ nor the particular coefficients of the objective equation. 

\begin{theorem}[Linear equations have few solutions in a multiplicative group]\label{ss}{\rm \cite{ESS2002}}
$$
A(d,r)\leq \exp \left((6d)^{3d}(r+1)\right).
$$
\end{theorem}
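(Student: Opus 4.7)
The plan is to prove Theorem~\ref{ss} by induction on the dimension $d$, with the quantitative Schmidt subspace theorem (in its $S$-unit form refined by Evertse and Schlickewei) as the principal engine. The base case $d=1$ is immediate, since $a_1 x_1 = 1$ has a unique solution.

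First I would reduce to the equation $y_1+\cdots+y_d=1$ with unit coefficients. Each $a_i$ can be absorbed by replacing $(x_1,\dots,x_d)$ with $(a_1 x_1,\dots,a_d x_d)$, so that the solutions land in a new subgroup $\Gamma'\subset (K^*)^d$ whose rank exceeds that of $\Gamma$ by at most $d$. This costs only a controlled factor in the final estimate, so from now on one counts nondegenerate solutions in $\Gamma'$ to $y_1+\cdots+y_d=1$.

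Next I would partition such solutions by the Weil height $h(\mathbf{y}):=\max_i h(y_i)$ into \emph{small} solutions with $h(\mathbf{y})\leq T$ and \emph{large} solutions with $h(\mathbf{y})>T$, for a threshold $T=T(d,r)$ to be chosen. For small solutions I would use the logarithmic embedding $\mathbf{y}\mapsto(\log|y_i|_v)_{i,v}$ over a suitable finite set of places making $\Gamma'$ an $S$-unit group; under this map $\Gamma'$ becomes a lattice of rank $O(r+d)$ inside a Euclidean space, the small solutions lie inside a box of radius $O(T)$, and a standard lattice-point count yields a contribution of order $\exp(O(dT(r+1)))$. For large solutions I would invoke the quantitative subspace theorem: they all lie on a number, depending only on $d$, of proper hyperplanes $\sum_i c_i y_i=0$ in $K^d$. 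On each such hyperplane one solves for a variable, arrives at an equation in at most $d-1$ unknowns inside a group of rank at most $r+d$, and checks that nondegeneracy of the original tuple forces nondegeneracy of the reduced one. The inductive hypothesis then applies to each hyperplane, and balancing the two contributions with the right choice of $T$ closes the induction at the bound $\exp((6d)^{3d}(r+1))$.

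The main obstacle, and the technical heart of \cite{ESS2002}, is the quantitative subspace theorem itself: one must have the number of exceptional hyperplanes and the small-solution threshold depend only on $d$ and $r$, not on $\Gamma$ or on the $a_i$, and the dependence must be no worse than doubly exponential in $d$ in order for the inductive balancing to yield precisely the stated bound. The uniformity in $\Gamma$ is exactly what makes the theorem usable as a black box in the sum-product applications of Theorems~\ref{sumprod} and~\ref{sumproddiff}.
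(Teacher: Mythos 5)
This statement is not proved in the paper at all: it is imported as a black box from Evertse, Schlickewei, and Schmidt \cite{ESS2002}, and the present paper's proofs of Theorems \ref{sumprod} and \ref{sumproddiff} only invoke it, never reprove it. There is therefore no in-paper argument for your sketch to be compared against; what you have written is a reconstruction of the argument in the cited reference.

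As such a reconstruction, your outline has the right skeleton (induction on $d$, a height dichotomy, a lattice count for small solutions, the quantitative subspace theorem for large ones), but several steps would need repair before it could be called a proof. First, the coefficient reduction is off: replacing $x_i$ by $a_i x_i$ sends $\Gamma$ to the \emph{coset} $a\Gamma$ with $a=(a_1,\dots,a_d)\in(K^*)^d$, and the group generated by $\Gamma$ and the single element $a$ has rank at most $r+1$, not $r+d$; in fact \cite{ESS2002} keep the coefficients and the group separate rather than performing this absorption. Second, and more substantially, Theorem \ref{ss} is stated for an arbitrary field $K$ of characteristic $0$ and an arbitrary finite-rank subgroup $\Gamma$, whereas the quantitative subspace theorem and the Weil height live over number fields; the passage from the general setting to the number-field setting is done in \cite{ESS2002} by a nontrivial specialization argument which your sketch omits entirely, and without which the theorem as stated does not follow. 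Third, the number of exceptional subspaces produced by the quantitative subspace theorem does not depend only on $d$, and controlling its dependence on the rank and the field data so that the induction actually closes at $\exp((6d)^{3d}(r+1))$ is precisely the hard part of \cite{ESS2002}; it cannot be waved through as "balancing the two contributions." None of this is a criticism of the paper, which legitimately cites the result, but the sketch should not be mistaken for a proof of Theorem \ref{ss}.
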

We will also use two other standard tools of additive combinatorics (see, for example, \cite{TV2006}).  The first is Freiman's theorem in torsion-free groups.

\begin{theorem}[Freiman's Theorem]\label{Frei} Let $Z$ be a torsion-free abelian group, and let $A\subset Z$ with $|A+A|\leq K|A|$ for some $K\in\mathbb{R}$.  Then there exists a proper generalized arithmetic progression $P$ of dimension at most $K-1$ and size $|P|\leq C(K)|A|$ which contains $A$.  Here $C(K)$ depends only on $K$.
\end{theorem}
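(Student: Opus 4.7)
The plan is to reduce to the classical integer case of Freiman's theorem and then apply its standard proof. First, since $A$ is finite and $Z$ is torsion-free, the subgroup of $Z$ generated by $A$ is finitely generated and torsion-free, hence isomorphic to $\mathbb{Z}^n$ for some $n$; so we may assume $A \subset \mathbb{Z}^n$. A further Freiman $2$-isomorphism argument, via a generic linear projection $\mathbb{Z}^n \to \mathbb{Z}$ that is injective on a sufficiently large iterated difference set of $A$, reduces the problem to $A \subset \mathbb{Z}$.

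In the integer setting, the Pl\"unnecke--Ruzsa inequalities give $|mA - nA| \ll_{m,n} K^{m+n}|A|$ for all $m,n$. The crux of the proof is Bogolyubov's lemma in Chang's sharp form: $2A - 2A$ contains a Bohr set $B(\Lambda,\rho)$ whose frequency set $\Lambda$ has size proportional to $K$, which ultimately produces the dimension bound $K-1$. This step uses Fourier analysis on $\mathbb{Z}$ together with Chang's covering lemma on the large spectrum of a set of small doubling, and is the quantitatively sharp heart of the argument.

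Geometry of numbers, specifically Minkowski's second theorem applied to the lattice dual to $\Lambda$, then converts this Bohr set into a proper generalized arithmetic progression $P_0$ of dimension $|\Lambda|$ and size $\gg_K |A|$ contained inside $2A - 2A$. Ruzsa's covering lemma, applied to the pair $(A, P_0)$ using the Pl\"unnecke estimate for $|A + P_0|$, then covers $A$ by $O_K(1)$ translates of $P_0 - P_0$. These translates are absorbed into the ambient progression to produce a single GAP of the claimed dimension and size containing $A$.

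The main obstacle is the Bogolyubov--Chang step, which simultaneously drives the dimension bound $K-1$ and determines the dependence $C(K)$. A subtle additional issue, once a GAP has been produced, is to prune it so that it is \emph{proper}, i.e., has distinct representations for each element; this requires a further geometry-of-numbers argument on the defining basis vectors of the progression.
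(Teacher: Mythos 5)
This statement is not proved in the paper at all: Theorem~\ref{Frei} is quoted as a known ``standard tool of additive combinatorics'' and cited from Tao--Vu and (via Lemma~\ref{multdim}) from Chang's refinement of Freiman's theorem in \cite{CHA2002}. There is therefore no in-paper proof to compare against; the theorem is used as a black box.

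As for your sketch on its own terms, it is the correct high-level outline of the standard Ruzsa--Chang proof, and you correctly flag the two delicate points (the sharp Bogolyubov--Chang spectral step that controls both dimension and $C(K)$, and the final pruning to properness). Two caveats worth recording. First, the Freiman-isomorphism reduction from $\mathbb{Z}^n$ to $\mathbb{Z}$ needs an $s$-isomorphism with $s$ substantially larger than $2$ (and, downstream, Ruzsa's modelling lemma into a cyclic group $\mathbb{Z}_N$ where the Fourier analysis actually takes place); describing it as a ``Freiman $2$-isomorphism'' and ``Fourier analysis on $\mathbb{Z}$'' glosses over where the harmonic analysis is performed and which iterated sumsets must be preserved. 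Second, passing from ``$\Lambda$ has size proportional to $K$'' to the precise dimension bound $\lfloor K-1\rfloor$ in the conclusion is not automatic; Chang's argument involves a further dimension-reduction step beyond simply counting frequencies in the Bohr set, so ``ultimately produces the dimension bound $K-1$'' compresses a nontrivial part of \cite{CHA2002}. Neither of these is a fatal flaw in a sketch, but if this were to serve as an actual proof both would need to be supplied.

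Given that the paper treats this as a citation rather than a result to be established, the most useful observation is that you have supplied a plausible proof outline for a theorem the authors deliberately left to the literature; the paper's own contribution begins downstream of Theorem~\ref{Frei}, where it is used (through Lemma~\ref{multdim}(a)) to bound the multiplicative dimension by the doubling constant.
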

The second is the Pl\"unnecke-Ruzsa Inequality, in a version due to Ruzsa \cite{RUZ1996}.

\begin{theorem}[Pl\"unnecke-Ruzsa Inequality]\label{Plun} {\rm \cite{RUZ1996}} Let $Z$ be an abelian group and let $A,B\subset Z$ with $|A+B|\leq K|A|$.  Then for every $n,m\in\mathbb{N}$ we have $|nB-mB|\leq K^{n+m}|A|$.
\end{theorem}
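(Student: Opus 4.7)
The plan is to reduce the mixed-sign quantity $|nB - mB|$ to two one-sided sumsets, handled by the Ruzsa triangle inequality, after first producing via a Plünnecke-type argument a nonempty subset $A' \subseteq A$ on which iterated sumsets with $B$ stay controlled.

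\emph{Stage 1 (one-sided sumset bound).} The goal is a nonempty $A' \subseteq A$ satisfying $|A' + jB| \leq K^j|A'|$ simultaneously for all $j \geq 1$. Following Ruzsa, encode the iterated sumsets as a layered directed graph with layer $j$ consisting of (a labelled copy of) $A + jB$ and edges from layer $j$ to layer $j+1$ implementing addition by an element of $B$. The resulting graph is commutative in Plünnecke's sense, and on such a graph the magnification ratios
\[
\mu_j := \min_{\emptyset \neq S \subseteq A} \frac{|S + jB|}{|S|}
\]
are submultiplicative, $\mu_{j+k} \leq \mu_j \mu_k$, so in particular $\mu_j \leq \mu_1^j \leq K^j$; the proof uses a tensor-power construction combined with a Menger-type flow decomposition to exhibit a single subset $A'$ attaining $|A' + jB| \leq K^j|A'|$ for every $j$.

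\emph{Stage 2 (Ruzsa triangle inequality).} For each $z \in nB - mB$ fix a representation $z = x_z - y_z$ with $x_z \in nB$, $y_z \in mB$, and define
\[
\phi \colon (nB - mB) \times A' \longrightarrow (nB + A') \times (mB + A'), \qquad \phi(z, a) = (x_z + a,\ y_z + a).
\]
Since one recovers $z$ as the difference of the coordinates and $a$ as the second coordinate minus $y_z$, the map $\phi$ is injective; hence
\[
|nB - mB| \cdot |A'| \leq |nB + A'| \cdot |mB + A'| \leq K^n|A'| \cdot K^m|A'|.
\]
Cancelling a factor of $|A'|$ and using $|A'| \leq |A|$ gives $|nB - mB| \leq K^{n+m}|A|$.

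The technical heart lies in Stage 1, specifically in establishing the submultiplicative bound $\mu_{j+k} \leq \mu_j \mu_k$ for the Plünnecke graph and extracting a single subset $A'$ that realises the bound at every level simultaneously. A cleaner alternative is Petridis's inductive reformulation, in which one picks $A'$ minimising $|A'+B|/|A'|$ and proves by induction on $|C|$ that $|A' + B + C| \leq K|A' + C|$, but either route requires careful exploitation of minimality. Once this Plünnecke-type step is secured, the injection in Stage 2 and the remaining bookkeeping are short and routine.
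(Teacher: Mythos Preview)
The paper does not supply its own proof of this statement: Theorem~\ref{Plun} is quoted from \cite{RUZ1996} as a black-box tool and used later in Section~3, so there is no in-paper argument to compare your proposal against. That said, your sketch is the standard and correct route to the Pl\"unnecke--Ruzsa inequality. Stage~2 is exactly Ruzsa's triangle inequality, applied verbatim. Stage~1 is where the content lies, and your formulation---obtaining a single nonempty $A'\subseteq A$ with $|A'+jB|\le K^j|A'|$ for every $j\ge 1$---is what is needed, since the triangle-inequality step requires the \emph{same} $A'$ to control both $|A'+nB|$ and $|A'+mB|$. Petridis's argument (pick $A'$ minimising $|A'+B|/|A'|$, then prove $|A'+B+C|\le K|A'+C|$ by induction on $|C|$) delivers this uniformly in $j$ in a few lines and is the cleanest way to fill the gap you flag; the older Pl\"unnecke-graph route also works but, as you note, extracting a single $A'$ valid at all levels takes more care than merely establishing $\mu_j\le\mu_1^j$. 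Either way the argument is sound, and the final bookkeeping $|nB-mB|\le K^{n+m}|A'|\le K^{n+m}|A|$ is immediate.
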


\bigskip

\noindent
{\bf Acknowledgements:} The author would like to thank J\'ozsef Solymosi for pointing out and providing the reference \cite{ESS2002}, and Izabella {\L}aba for her constant support throughout the production of this work.

\section{Proof of Theorem \ref{sumprod}}
Let $A\subset K$ be finite with $|A|=n$ and $|A^2|\leq \alpha |A|$.

We begin by extending a definition from \cite{CHA2003}.  In particular, consider $A^\ast:=A\setminus\{0\}\subset K^\ast$.  Then we define the multiplicative dimension of $A$, denoted $\dim_\times(A)$ to be the minimal number $m$ such that $A^\ast$ is contained in a subgroup of $K^\ast$ of rank $m$.  In other words, 

\begin{equation}\label{dimtimes}
\dim_\times(A) = \min\{M: \exists\ z_1,\dots,z_M\in K^\ast\ {\rm such\ that\ } A^\ast\subset\langle z_1,\dots,z_M\rangle\}.
\end{equation}

We have the following two properties.

\begin{lemma}[Multiplicative Dimension]\label{multdim} Let $A\subset K$ be finite, and suppose $A$ has multiplicative dimension $m$.
\begin{itemize}
\item[(a)] $m\leq |(A^\ast)^2|/|A^\ast|$
\item[(b)] If $A^\ast\subset\langle z_1,\dots, z_m\rangle:=G$, there is an isomorphism $\nu: G\to(\mathbb{Z}^m,+)$.  Moreover, $\nu(A^\ast)$ is $m$-dimensional (that is, when viewed as a subset of $\mathbb{R}^m$ the smallest dimension subspace it may be contained in has full dimension)
\end{itemize}
\end{lemma}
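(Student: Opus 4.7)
My plan is to prove (b) first, since the embedding it provides is useful for (a), and both parts rest on the minimality of $m=\dim_\times(A)$.

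For (b), fix minimal generators $z_1,\dots,z_m$ with $A^\ast\subset G:=\langle z_1,\dots,z_m\rangle$. First I argue the $z_i$ must be multiplicatively independent, hence $G\cong\mathbb{Z}^m$: a nontrivial relation $z_1^{a_1}\cdots z_m^{a_m}=1$ would, via Smith normal form on the relation lattice together with the fact that finite subgroups of $K^\ast$ are cyclic, let $G$ be written with strictly fewer than $m$ generators, contradicting minimality (any torsion contribution is implicitly absorbed in this reduction). Multiplicative independence then yields the isomorphism $\nu:G\to(\mathbb{Z}^m,+)$ via $z_i\mapsto e_i$. For the full-dimensionality claim, if $\nu(A^\ast)$ lay in a proper rational subspace $W\subsetneq\mathbb{Q}^m$, then $\nu^{-1}(W\cap\mathbb{Z}^m)$ would be a subgroup of $G$ of rank at most $m-1$ still containing $A^\ast$, again contradicting the definition of $m$.

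For (a), the plan is to exhibit $m$ pairwise disjoint multiplicative translates of $A^\ast$ inside $(A^\ast)^{2}$, which gives $m|A^\ast|\le|(A^\ast)^2|$. Using the embedding from (b), I would select $a_1,\dots,a_m\in A^\ast$ whose images $\nu(a_i)$ are $\mathbb{Q}$-linearly independent in $\mathbb{Q}^m$, available by the full-dimensionality established in (b). Each $a_iA^\ast\subset(A^\ast)^2$, and if these $m$ sets are pairwise disjoint the bound follows immediately.

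The main obstacle is the disjointness step in (a). An overlap $a_ix=a_jy$ with $x,y\in A^\ast$ forces $a_j/a_i\in A^\ast\cdot(A^\ast)^{-1}$, which is not automatic from the multiplicative independence of the $a_i$ alone. I expect the correct fix to be a greedy refinement: select each $a_k$ to also avoid the expanding union $\bigcup_{i<k}a_i\cdot A^\ast\cdot(A^\ast)^{-1}$. Either the procedure succeeds for $m$ steps (yielding the bound directly), or at some step $k^\ast\le m$ the avoidance fails and $A^\ast\subset\bigcup_{i<k^\ast}a_iA^\ast(A^\ast)^{-1}$, in which case a Pl\"unnecke--Ruzsa type count in $\langle A^\ast\rangle$ should still recover the desired inequality. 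Arranging this dichotomy cleanly so the bound comes out precisely as stated is the crux of the argument.
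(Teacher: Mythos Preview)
Your argument for (b) is fine and matches the paper's: both use the structure theorem for finitely generated abelian groups together with minimality of $m$. The paper is terser (one sentence each for the isomorphism and for full-dimensionality), but the content is the same.

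For (a), however, your approach diverges from the paper's and the gap you yourself identify is real. The paper does not try to exhibit disjoint translates at all; it invokes Chang's refinement of Freiman's theorem in torsion-free abelian groups, which says that if $|(A^\ast)^2|\le \beta|A^\ast|$ then $A^\ast$ lies in a (multiplicative) generalized arithmetic progression of dimension $\lfloor\beta-1\rfloor$, hence in a subgroup of $K^\ast$ on at most $\lfloor\beta-1\rfloor+1\le\beta$ generators, giving $m\le\beta$ immediately.

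Your disjoint-translate strategy does not seem salvageable into a proof of the stated inequality. The failure case of your greedy procedure, namely $A^\ast\subset\bigcup_{i<k^\ast} a_i\,A^\ast(A^\ast)^{-1}$, only tells you $|A^\ast|\le (k^\ast-1)\,|A^\ast(A^\ast)^{-1}|$, which is information about the \emph{quotient} set, not about $|(A^\ast)^2|$; Pl\"unnecke--Ruzsa goes the wrong direction here (it would bound $|A^\ast(A^\ast)^{-1}|$ \emph{above} by a power of $\beta$, not below). There is also no reason the linear independence and the translate-disjointness conditions can be satisfied simultaneously. The missing idea is precisely the Freiman--Chang structural input: getting dimension $\le K-1$ from doubling $K$ is a nontrivial theorem, not something one can extract from elementary covering arguments alone.
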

\begin{proof}
(a) Let $\beta=|(A^\ast)^2|/|A^\ast|$.  Since $K^\ast$ is abelian, Chang's refinement of Freiman's theorem applies (\cite{CHA2002}, stated for torsion free groups in \cite{TV2006}), and we may contain $A^\ast$ in a progression in $K^\ast$ of dimension $\lfloor \beta-1\rfloor$, say
$$
A\subset \left\{z_1\prod_{i=2}^{\lfloor\alpha-1\rfloor+1} z_i^{j_i}: 0\leq j_i\leq k_i-1\right\}.
$$
But the right hand side is clearly a subset of $\langle z_1,\dots, z_{\lfloor\alpha-1\rfloor +1}\rangle$, which clearly has fewer than $\alpha$ generators.
\newline\noindent
(b) By the fundamental theorem of finitely generated abelian groups applied to $\langle z_1,\dots, z_m\rangle$, there is such an isomorphism.  Minimality of $m$ implies the full dimensionality of $A$.
\end{proof}

Note that if $0\in A$, then 
\begin{eqnarray}
|(A^\ast)^2| & = & |A^2\setminus\{0\}|\nonumber\\
& = & |A^2|-1 \nonumber\\
& \leq & \alpha|A|-1 \nonumber\\
& = & \alpha(|A^\ast|+1)-1 \nonumber\\
& < & \alpha|A^\ast|.\nonumber  
\end{eqnarray}
Hence Theorem \ref{sumprod} is an easy corollary of the following.

\begin{proposition}[Iterated Sum-Product]\label{sumprodmult} Let $A\subset K$ be finite, and suppose that $\dim_\times(A)=m$.  Then for any $h\geq 2$ we have
$$
|hA|\geq e^{(-h^{65h}(m+1))} |A|^h
$$
\end{proposition}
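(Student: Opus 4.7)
The plan is to upper-bound the higher additive energy
\[
E_h(A) := \#\{(a_1,\dots,a_{2h}) \in A^{2h} : a_1 + \cdots + a_h = a_{h+1} + \cdots + a_{2h}\}
\]
and then deduce the proposition from the Cauchy--Schwarz inequality $|hA| \geq |A|^{2h}/E_h(A)$, which is obtained by squaring the identity $|A|^h = \sum_{s \in hA} r_h(s)$, where $r_h(s)$ counts ordered representations of $s$ as a sum of $h$ elements of $A$. Without loss of generality I assume $0 \notin A$, so that $A \subset G := \langle z_1,\dots,z_m\rangle \subset K^\ast$; the case $0 \in A$ reduces to $A^\ast := A \setminus \{0\}$ at the cost of a factor $(1-1/|A|)^h$ that is absorbed into the exponential constant (the conclusion being trivial if $|A|$ is small compared to that constant).

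The bound $E_h(A) \leq \exp(h^{65h}(m+1))|A|^h$ would be proved by strong induction on $h$, carried out for the unbalanced variant
\[
T_{u,v}(A) := \#\{(a_1,\dots,a_u,b_1,\dots,b_v) \in A^{u+v} : a_1 + \cdots + a_u = b_1 + \cdots + b_v\},
\]
which specialises to $E_h = T_{h,h}$. After dividing the defining equation by one of the variables, the remaining $u+v-1$ unknowns satisfy a linear equation with coefficients $\pm 1$ in the multiplicative group $G$ of rank $m$. Solutions with no proper vanishing subsum are counted by Theorem \ref{ss} applied with $d=u+v-1$ and $r=m$, giving at most $|A|\exp((6(u+v-1))^{3(u+v-1)}(m+1))$, which for $u+v=2h$ sits comfortably inside the target. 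Solutions with some proper vanishing subsum are classified by the maximal partition $\{1,\dots,u+v\} = B_1 \sqcup \cdots \sqcup B_k$ into blocks on which the induced signed subsum vanishes irreducibly; since $0\notin A$ each block has size at least two, so $k \leq h$, and the inductive hypothesis applied to each block (each being an instance of $T_{u',v'}$ with $u'+v'<u+v$, bounded via a per-block ESS call) controls its contribution. Summing the resulting products over the Bell-number many partitions closes the recursion.

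The main obstacle is the bookkeeping of constants. One must verify that $h^{65h}(m+1)$ genuinely dominates, at every inductive depth, both the ESS exponent $(6(u+v-1))^{3(u+v-1)} \leq (12h)^{6h}$ and the combinatorial factor $B_{2h}\leq (2h)^{2h}$ from summing over set partitions, and also handle the unbalanced blocks $u\neq v$, either by a direct argument or through the Cauchy--Schwarz reduction $T_{u,v} \leq (E_u\, E_v)^{1/2}$ which places them back into the balanced framework. The constant $65$ is what provides enough slack for the induction to close uniformly in the recursion depth; once that numerical step is established, the proposition follows immediately from the Cauchy--Schwarz lower bound on $|hA|$.
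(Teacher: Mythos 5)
Your proposal follows essentially the same strategy as the paper: bound the number of additive $2h$-tuples by decomposing solutions into irreducible vanishing subsums, count the irreducible pieces with the Evertse--Schlickewei--Schmidt theorem, and finish with the Cauchy--Schwarz lower bound $|hA|\geq |A|^{2h}/E_h(A)$. Your organization differs in two minor but genuine ways, both of which streamline the argument: you avoid the paper's symmetrization $A\mapsto A\cup(-A)$ by exploiting that Theorem~\ref{ss} allows arbitrary (here $\pm1$) coefficients, and you decompose directly into a full partition of irreducible zero-sum blocks rather than the paper's two-block (zero-sum plus ``$=1$'' remainder) recursion with the bookkeeping functions $\gamma_0,\gamma_1$. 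Your treatment of $0\in A$ by passing to $A^\ast$ and absorbing $(1-1/|A|)^h$ is also cleaner than the paper's explicit count of solutions with a zero coordinate, and it is valid because the claim is trivial whenever $|A|\leq e^{h^{65h}(m+1)/(h-1)}$. One slip to correct: after normalizing, the $(u+v-1)$-tuple ranges over $G^{u+v-1}$, which has rank $(u+v-1)m$, not $m$; so Theorem~\ref{ss} gives $\exp\bigl((6(u+v-1))^{3(u+v-1)}((u+v-1)m+1)\bigr)$. This does not threaten the argument since $(u+v-1)m+1\leq(u+v-1)(m+1)$ only inflates the exponent by a polynomial factor in $h$, which $h^{65h}$ easily absorbs, but it must be stated correctly when the constants are checked.
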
  

To prove Proposition \ref{sumprodmult} we seek to bound the number of solutions to the equation

$$
x_1+\dots + x_{h} = x_{h+1}+\dots +x_{2h}
$$
where $x_i\in A$ for each $i$.  In the case $A=-A$, we may rewrite this equation as $x_1+\dots +x_{2h-1}+ x_{2h}=0$.  If we further assume $0\notin A$, then we are free to apply Theorem \ref{ss} to $A$.  The result is the following statement, similar to a lemma of Chang in \cite{CHA2006}:

\begin{lemma}\label{ind} Suppose that $A$ satisfies $A=-A$ and $0\notin A$.  For every $k\geq 2$ there is $n$ sufficiently large that 
\begin{itemize}
\item[(a)] The number of solutions $(y_1,\dots, y_k)\in A^k$ to $y_1+\dots+ y_k =1$ is at most $e^{k^{12k}(m+1)}n^{\lfloor k/2\rfloor}$ if $k$ is odd and $e^{k^{12k}(m+1)}n^{k/2-1}$ if $k$ is even.
\item[(b)] The number of solutions $(y_1,\dots, y_k)\in A^k$ to $y_1+\dots+y_k=0$ is at most $e^{k^{12k}(m+1)}n^{\lfloor k/2\rfloor}$.
\end{itemize}
\end{lemma}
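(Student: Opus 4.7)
The plan is to prove (a) and (b) simultaneously by induction on $k$, with (b) at level $k$ reducing to (a) at level $k-1$ via a normalization trick so that the real work is in part (a). The base case $k=2$ is immediate: for (a) every solution of $y_1+y_2=1$ in $A^2$ is automatically nondegenerate (no single $y_i$ can vanish since $0\notin A$), and applying Theorem \ref{ss} with $d=2$, $r=2m$ to the rank-$(2m)$ subgroup $G^2\subset(K^\ast)^2$ (where $A^\ast\subset G$ has rank $m$) gives at most $\exp(12^6(2m+1))\le e^{2^{24}(m+1)}=e^{k^{12k}(m+1)}$ tuples; for (b), the equation $y_1+y_2=0$ has exactly $n$ solutions by $A=-A$.

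For the inductive step in (a) with $k\ge 3$, I split solutions of $y_1+\cdots+y_k=1$ into nondegenerate and degenerate. Tuples lie in $G^k$ of rank $\le km$, so Theorem \ref{ss} bounds the nondegenerate count by $\exp((6k)^{3k}(km+1))$. For the degenerate contribution I range over nonempty proper $S\subsetneq[k]$ with $\sum_{i\in S}y_i=0$; the hypothesis $0\notin A$ forces $|S|\ge 2$, so both $|S|=s$ and $k-s$ are strictly smaller than $k$. The inner $s$-tuple is a part-(b) configuration and the complementary $(k-s)$-tuple a part-(a) configuration, so induction bounds the degenerate count by $\sum_{s=2}^{k-1}\binom{k}{s}N_s^{(\mathrm{b})}N_{k-s}^{(\mathrm{a})}$. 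A parity-by-parity check across the four combinations of the parities of $s$ and $k-s$ confirms that the resulting $n$-exponent never exceeds $\lfloor k/2\rfloor$ (odd $k$) or $k/2-1$ (even $k$), and the exponential constants combine to at most $e^{k^{12k}(m+1)}$ using the slack $(k-1)^{12(k-1)}\le k^{12k}/(e^{12}k^{12})$.

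For the inductive step in (b) with $k\ge 3$, I fix $y_k\in A$ (giving a free factor of $n$) and divide through by $-y_k$ to obtain $w_1+\cdots+w_{k-1}=1$ with $w_i=-y_i/y_k$. The set $-A/y_k$ inherits all hypotheses of the lemma: it has size $n$, equals its own negation (using $A=-A$), misses $0$, and lies in $G$ (since $-1=(-y_k)/y_k\in G$), so its multiplicative dimension is at most $m$. Part (a) at level $k-1$, just established, then bounds the number of $w$-tuples by $e^{(k-1)^{12(k-1)}(m+1)}$ times $n^{\lfloor(k-1)/2\rfloor}$ or $n^{(k-1)/2-1}$ according to the parity of $k-1$; multiplying by the $n$ choices of $y_k$ yields exactly the claimed exponent $\lfloor k/2\rfloor$ for either parity of $k$, and $(k-1)^{12(k-1)}\le k^{12k}$ handles the exponential constant. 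The main obstacle is not the structural argument but the numerical bookkeeping: verifying that $k^{12k}(m+1)$ absorbs simultaneously the Evertse--Schlickewei--Schmidt term $(6k)^{3k}(km+1)$, the $\binom{k}{s}\le 2^k$ overcounting in the degenerate split, and the accumulated inductive constants from smaller $k$, while the $n$-exponent agrees across every parity case; the ``$n$ sufficiently large'' hypothesis is what permits these polynomial-in-$k$ combinatorial factors to be absorbed into a slight enlargement of the exponential constant.
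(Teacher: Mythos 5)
Your proof is correct and follows essentially the same approach as the paper's: both arguments proceed by joint induction on $k$, reduce part (b) at level $k$ to part (a) at level $k-1$ by fixing $y_k$ and dividing through by $-y_k$, and bound part (a) by splitting into nondegenerate solutions (controlled by Theorem~\ref{ss} applied to the rank-$km$ group $G^k$) and degenerate solutions (a sum over vanishing subsums of sizes $2\le s\le k-1$, each bounded by the inductive hypothesis). Your observation that $0\notin A$ rules out degenerate solutions in the $k=2$ base case is a small cleanup the paper glosses over, but the structure and the bookkeeping of the $n$-exponents and exponential constants are the same.
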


\begin{proof}
The proof follows the same line that of the lemma in \cite{CHA2006}.  The main difference now is that we keep track of all constants.  Let $\Ctwo{t}:=\exp(t^{12t}(m + 1))$.  In addition, let 
\begin{equation}
\gamma_1(t)= \piecewise{\lfloor t/2\rfloor}{t\ {\rm is\ odd}}{t/2-1}{\rm is\ even}
\end{equation}
\begin{equation}
\gamma_0(t)= \lfloor t/2 \rfloor.
\end{equation}
Hence we are trying to show that the number of solutions to $y_1+\dots +y_k=1$ is at most $D_{k,m}n^{\gamma_1(k)}$ and the number of solutions to $y_1+\dots +y_k=0$ is at most $D_{k,m}n^{\gamma_0(k)}$.

\noindent
{\bf Base Case:} When $k=2$ we see directly that the number of solutions to $y_1+y_2=0$ is at most $n$ (Each of $|A|$ choices for $y_1$ gives one possibility for $y_2$), while the number of nondegenerate solutions to $y_1+y_2=1$ is at most $\exp((12)^6 (2m +1))$ by Theorem \ref{ss}, and the only two possible degenerate solutions are $(0,1)$ and $(1,0)$.  We have
\begin{equation}
\exp((12)^6 (2m +1))+2 \leq \exp(2^{24}(m +1)).\nonumber
\end{equation}
Hence the bound holds for $k=2$.

\noindent
{\bf Induction:} Let $k>2$ be fixed, and suppose both parts of the theorem have been proved for each integer less than $k$.  

We begin with the equation $y_1+\dots+y_k=0$, which we rewrite as $z_1+\dots+z_{k-1} = 1$ by fixing a value of $-y_k$, dividing through by it, and rearranging.  There are still $n$ possibilities for each variable in this equation, and each still falls in $G$ since $G$ is closed under division.  

First suppose k is even.  Then $k-1$ is odd, so by the inductive hypothesis there are at most $D_{k-1,m}n^{(k-2)/2}$ solutions to the latter equation, and therefore $D_{k-1,m}n^{k/2}$ solutions to the original.  Since $k$ is even, this is the desired result.

Next, if $k$ is odd, $k-1$ is even, and the latter equation has fewer than $D_{k-1,m}n^{(k-1)/2-1}$ solutions, whereby the original has fewer than $D_{k-1,m}n^{(k-1)/2}$.  This again gives the desired result.

Hence the result for zero sums holds for $k$.

To count solutions to $y_1+\dots +y_k =1$, we begin by applying the Theorem \ref{ss}.  This tells us that the number of nondegenerate solutions in the entirety of the rank $km$ group $G^k$ is bounded by $\exp((6k)^{3k}(km + 1))$.  We use the inductive hypothesis to count degenerate solutions.  But this reduces to computing, for each $t$, $2\leq t\leq k-1$, the number of solutions to the pair of equations

$$
\sum_{j=1}^{t} y_{i_j} = 0
$$
$$
\sum_{j=t+1}^{k} y_{i_j} = 1
$$
where $i_1,\dots, i_k$ is some permutation of $1,\dots, k$ with $i_1<\dots<i_t$ and $i_{t+1}<\dots<i_k$.  

Since there are $\binom{k}{t}$ choices for $\{i_1,\dots, i_t\}$, the total number of solutions is bounded via the induction by

$$
2\sum_{t=2}^{k-1}\binom{k}{t}\Ctwo{t}\Ctwo{k-t}n^{\gamma_0(t)+\gamma_1(k-t)}
$$
where we have used the extra factor of two to simply account for the small number nondegenerate solutions.  We begin by computing the exponent.  We can easily compute that for $k$ even we have
\begin{equation}
\gamma_0(t)+\gamma_1(k-t)=k/2-1.
\end{equation}
Similarly, for $k$ odd, we get
\begin{equation}
\gamma_0(t)+\gamma_1(k-t)=\piecewise{(k-1)/2}{t\ {\rm is\ even}}{(k-3)/2}{t\ \rm is\ odd}.
\end{equation}
In both cases we see that we can bound the number of solutions by
\begin{equation}
\left(2\sum_{t=2}^{k-1}\binom{k}{t}\Ctwo{t}\Ctwo{k-t}\right)n^{\gamma_1(k)}
\end{equation}
and we need only compute the constant.

Now, 
\begin{equation}
\Ctwo{t}\Ctwo{k-t} = \exp\left[(t^{12t}+(k-t)^{12(k-t)})(m + 1) \right].
\end{equation}
The exponent is maximized over all possible values of $t$ for $t=k-1$.

The entire sum is therefore bounded above by

\begin{eqnarray}
2\exp((1+(k-1)^{12(k-1)})(m + 1))\sum_{t=2}^{k-1}\binom{k}{t} \leq \exp(k^{12k}(m+1))\nonumber
\end{eqnarray}
using the fact that $\sum_{t=2}^{k-1}\binom{k}{t}<2^k$.

Hence the result follows by induction.
\end{proof}

Now, if $A\neq -A$, we simply extend $A$ to $A'=A\cup (-A)$.  This increases the size of our objective set to at most $2n$, while increasing the rank of the ambient subgroup $G$ by at most 1 (adding -1 as a generator).  Hence the rank of $G^k$ increases by at most $k$.  

There are fewer solutions to $x_1+\dots + x_{h} = x_{h+1}+\dots +x_{2h}$ in $A$ than there are in $A'$, the latter quantity being bounded by $\exp((2h)^{32h}(m+2))(2n)^h$.  

If, in addition, $0\in A$, the number of solutions we gain is certainly less than

$$
\sum_{t=1}^{2h-1}\binom{2h}{t}D_{2h-t,m+1}(2n)^{\lfloor (2h-t)/2\rfloor}
$$
Here we have set one or more variables (of which there are $2h$) equal to 0 and used Lemma \ref{ind} to count the number of solutions to the resulting equation.  We can bound this by
\begin{eqnarray}
D_{2h-1,m+1}(2n)^{h-1}\cdot 2^{2h} & \leq & 2^{3h-1}\exp((2h-1)^{16(2h-1)}(m+2))n^{h-1}\nonumber\\
& \leq & \exp((2h-1)^{16(2h-1)}(m+1)+ 5h)n^{h-1}.\nonumber
\end{eqnarray}

The total possible number of solutions after symmetrizing and adding 0, then, is
\vspace{-0.1cm}
\begin{eqnarray}
& \leq & 2^h\exp((2h)^{32h}(m+2))n^h + \exp((2h-1)^{16(2h-1)}(m+1)+5h)n^{h-1} \nonumber\\
& \leq & \exp(h^{65h}(m+1))n^h.\nonumber
\end{eqnarray}

We have therefore proved:

\begin{lemma}[Additive $2h$-tuples] The number of additive $2h$-tuples in $A$, that is the number of solutions in $A^{2h}$ to the equation $x_1+ \dots+x_h=x_{h+1}+\dots+x_{2h}$, is bounded above by $\exp(h^{65h}(m+1))n^h$.
\end{lemma}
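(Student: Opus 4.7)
The plan is to reduce the counting of additive $2h$-tuples in $A$ to counting zero-sum solutions, which Lemma~\ref{ind}(b) already controls. First I would symmetrize by forming $A' = A \cup (-A)$, so that $|A'| \le 2n$. Adjoining $-1$ to a set of generators of the rank-$m$ multiplicative group containing $A^\ast$ raises the rank by at most one, so $\dim_\times(A') \le m+1$. A solution $(x_1,\dots,x_{2h}) \in A^{2h}$ to $x_1+\cdots+x_h = x_{h+1}+\cdots+x_{2h}$ corresponds injectively (via $y_i = x_i$ for $i \le h$ and $y_i = -x_i$ for $i > h$) to a solution $(y_1,\dots,y_{2h}) \in (A')^{2h}$ of $y_1+\cdots+y_{2h}=0$, so it suffices to count the latter.

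If $0 \notin A$, Lemma~\ref{ind}(b) applies to $A'$ directly with $k=2h$ and multiplicative dimension $m+1$, giving at most $\exp((2h)^{24h}(m+2))\,(2n)^h$ solutions. If $0 \in A$, the hypothesis $0 \notin A'$ fails, so I would stratify solutions by the set $S \subset \{1,\dots,2h\}$ of coordinates equal to $0$. For each nonempty $S$, the remaining $2h - |S|$ coordinates satisfy a zero-sum equation in $(A')^\ast = A' \setminus \{0\}$, which has multiplicative dimension at most $m+1$ and size at most $2n$, and Lemma~\ref{ind}(b) gives an upper bound $\exp((2h-|S|)^{12(2h-|S|)}(m+2))\,(2n)^{\lfloor (2h-|S|)/2\rfloor}$ on the number of solutions. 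Summing over $S$ and using $\sum_{t=1}^{2h}\binom{2h}{t} \le 2^{2h}$ produces a contribution smaller than the main term by a factor of $n$, since each term involves at most $n^{h-1}$ rather than $n^h$.

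The only genuine work is bookkeeping the constants so that the combined bound collapses to $\exp(h^{65h}(m+1))\,n^h$. Since $h^{65h}$ dominates $(2h)^{24h}$ by a factor of $h^{41h}/2^{24h}$, there is ample slack to absorb the doubled set size $(2n)^h$, the transition from $m+2$ back to $m+1$, the sum over stratification subsets, and the polynomially smaller zero-containing contribution, provided $n$ is sufficiently large relative to $h$ and $m$. The principal obstacle is therefore purely accounting rather than conceptual: once one commits to the symmetrization and to separately handling zero coordinates, the estimate follows from Lemma~\ref{ind}(b) with room to spare.
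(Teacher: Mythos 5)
Your proposal follows essentially the same route as the paper: symmetrize to $A' = A\cup(-A)$ (raising size to at most $2n$ and the ambient group rank to at most $m+1$), convert the additive $2h$-tuple equation into a single zero-sum equation of length $2h$ via sign flips, apply Lemma~\ref{ind}(b) to $(A')^\ast$, and separately account for coordinates equal to zero by stratifying and re-applying the lemma to the shorter equations. The constant bookkeeping is also handled in the same spirit — the paper is merely more generous (it quotes $(2h)^{32h}$ where the tight application of Lemma~\ref{ind}(b) yields $(2h)^{24h}$), and in both cases there is more than enough slack in $h^{65h}$ to absorb the factors of $2$, the passage from $m+2$ to $m+1$, and the $\binom{2h}{t}$ sum from stratification.

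Two very minor points worth tightening in a write-up. First, when $0\in A$ you should note explicitly that the $S=\emptyset$ stratum is still governed by the $(A')^\ast$ count (your phrasing ``for each nonempty $S$'' leaves the main term implicit). Second, when you cite ``room to spare,'' you should verify, as the paper does, that the zero-coordinate contribution is dominated by the $t=1$ term $D_{2h-1,m+1}(2n)^{h-1}$ times $2^{2h}$, which is indeed smaller than the main term by a factor of order $n$, so the final exponent $h^{65h}(m+1)$ comfortably covers both pieces.
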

In light of the following lemma from \cite{CHA2003} we have proved Proposition \ref{sumprodmult}.

\begin{lemma}[Cauchy-Schwarz for iterated sumsets]\label{tuples} Let $M$ denote the number of additive $2h$-tuples in a finite set $B$, that is the number of solutions in $B$ to the equation $x_1+ \dots+x_h=x_{h+1}+\dots+x_{2h}$.  Then
$$
|hB|\geq \frac{|B|^{2h}}{M}
$$
\end{lemma}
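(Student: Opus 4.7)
The plan is the classical second-moment/Cauchy--Schwarz argument for representation functions. Define the representation function
$$
r(s) = \#\{(x_1,\dots,x_h)\in B^h : x_1+\dots+x_h = s\}.
$$
Two basic identities will do all the work: first, counting every ordered $h$-tuple in $B^h$ according to its sum gives $\sum_{s} r(s) = |B|^h$; second, counting ordered $2h$-tuples with $x_1+\dots+x_h = x_{h+1}+\dots+x_{2h}$ by conditioning on the common value $s$ gives $\sum_{s} r(s)^2 = M$. Crucially, the support of $r$ is contained in $hB$, so both sums effectively range over $s\in hB$.

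Next I would apply the Cauchy--Schwarz inequality to the vector $(r(s))_{s\in hB}$ against the constant vector $1$:
$$
|B|^{2h} = \left(\sum_{s\in hB} r(s)\right)^{2} \leq |hB|\cdot\sum_{s\in hB} r(s)^{2} = |hB|\cdot M.
$$
Rearranging yields the desired inequality $|hB|\geq |B|^{2h}/M$.

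There is no real obstacle here; the only thing to be careful about is the bookkeeping that $\sum_s r(s)^2$ indeed equals the number of solutions to $x_1+\dots+x_h=x_{h+1}+\dots+x_{2h}$ (a standard orthogonality/convolution identity), and that restricting the first sum to $s\in hB$ is legitimate because $r(s)=0$ outside $hB$. Both are immediate from the definition of $r$ and of $hB$.
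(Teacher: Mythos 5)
Your proof is correct and is exactly the standard Cauchy--Schwarz/second-moment argument; the paper does not reprove this lemma (it cites Chang), but the analogous Lemma~\ref{CSdistinct} for distinct sets is proved in the paper by precisely the same representation-function-plus-Cauchy--Schwarz computation, so your approach matches the paper's.
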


\section{Proof of Theorem 1}
We are now in a position to follow the proof in \cite{CHA2003}, using our revised definition of multiplicative dimension (\ref{dimtimes}) and the bound in Proposition \ref{sumprodmult}.  We begin by showing that a large proportion of the iterated sumset $hA$ is covered by sums of $h$ distinct elements.

\begin{lemma}[Stirling's formula applied to Lemma \ref{tuples}]\label{simpsum} Let $A\subset K$ be finite with multiplicative dimension $m$ and with $|A|$ sufficiently large.  Then for any sufficiently large $h\in\mathbb{N}$ with $h\leq |A|$ we have 
$$
|hA\cap A^+| \geq \frac{|A|^h}{\exp(h^{66h}(m+1))}.
$$
\end{lemma}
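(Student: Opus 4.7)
The plan is to refine the Cauchy--Schwarz argument underlying Lemma \ref{tuples} so that it is restricted to representations of each sum by $h$ \emph{distinct} summands; such representations automatically witness membership in both $hA$ and $A^+$.

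Setting $n=|A|$, I would introduce the restricted representation function $r^*(x)$ that counts ordered tuples $(a_{i_1},\dots,a_{i_h})\in A^h$ with pairwise distinct indices satisfying $a_{i_1}+\cdots+a_{i_h}=x$. Two observations frame the argument: first, $\sum_x r^*(x) = n!/(n-h)!$, and second, $\{x:r^*(x)>0\}\subseteq hA\cap A^+$, since any such $x$ is simultaneously a sum of $h$ elements of $A$ and a sum over an $h$-element subset of $\{a_1,\dots,a_n\}$.

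Next, since $r^*$ is pointwise dominated by the unrestricted ordered representation function $r$, one has $\sum_x r^*(x)^2 \leq \sum_x r(x)^2 = M$, which is exactly the number of additive $2h$-tuples in $A$. The Additive $2h$-tuples lemma from the previous section then gives $M \leq \exp(h^{65h}(m+1))\,n^h$. Cauchy--Schwarz applied to $\sum_x r^*(x)$ yields
$$
|hA\cap A^+| \;\geq\; \frac{(n!/(n-h)!)^{2}}{\exp(h^{65h}(m+1))\,n^h}.
$$

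It remains to convert the factorials into a clean power of $n$. I would use Stirling in the form $n!/(n-h)! = h!\,\binom{n}{h}\geq (h/e)^h (n/h)^h = (n/e)^h$, so the numerator is at least $n^{2h}e^{-2h}$, and the displayed lower bound becomes $n^h/(e^{2h}\exp(h^{65h}(m+1)))$. To absorb the factor $e^{2h}$ into the exponential constant it suffices to verify $2h + h^{65h}(m+1) \leq h^{66h}(m+1)$, i.e.\ $2h \leq h^{65h}(h^h-1)(m+1)$, which is immediate for $h\geq 2$. The whole argument is essentially routine once the restricted representation function is set up; the only genuine content is the bound on additive $2h$-tuples already established via Theorem \ref{ss}, and the sole mild obstacle is checking that the Stirling loss $e^{-2h}$ is tiny compared with the gap between $h^{65h}$ and $h^{66h}$ in the exponent.
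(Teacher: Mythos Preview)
Your argument is correct and follows essentially the same route as the paper: restrict to sums of $h$ distinct elements, apply Cauchy--Schwarz against the additive $2h$-tuple count $M\leq \exp(h^{65h}(m+1))n^h$, and use Stirling to clean up the factorial loss into the jump from $h^{65h}$ to $h^{66h}$. The only cosmetic difference is that the paper works with the unrestricted $r_{hA}$ summed over $hA\cap A^+$ and the lower bound $\binom{n}{h}$, whereas you use the restricted $r^*$ and the lower bound $n!/(n-h)!$; the resulting extra factors ($h^{2h+1}$ versus $e^{2h}$) are absorbed identically.
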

\begin{proof}
This follows exactly as in \cite{CHA2003}.  First we note that the left hand side is the number of simple sums with exactly $h$ summands.  Letting
$$
r_{hA}(x)=|\{(a_1,\dots,a_h)\in A^h: a_1+\dots+ a_h = x\}|
$$
we clearly have
$$
\binom{|A|}{h} \leq \sum_{x\in hA\cap A^+} r_{hA}(x).
$$
Using Stirling's formula in the form $N!\sim N^{N+1/2}e^{-N}$ to write
$$
\binom{|A|}{h}=\frac{|A|!}{h!(|A|-h)!}\geq C\frac{|A|^{|A|}}{h^{h+1/2}|A|^{|A|-h}}=C(|A|/h^{1+1/2h})^h.
$$
for an absolute constant $C$.  Combining the previous two relations and applying Cauch-Schwartz followed by Proposition \ref{sumprodmult} we have
\begin{eqnarray}
C(|A|/h^{1+1/2h})^h & \leq & |hA\cap A^+|^{1/2} \left(\sum_{x\in hA\cap A^+} (r_{hA}(x))^2\right)^{1/2}\nonumber\\
& \leq & |hA\cap A^+|^{1/2}(\exp(h^{65h}(m+1))|A|^h)^{1/2}.\nonumber
\end{eqnarray}
It follows that 
$$
|hA\cap A^+| \geq C\frac{|A|^h}{h^{2h+1}\exp(h^{65h}(m+1))},
$$
and absorbing the constant $C$ and the factor $h^{2h+1}$ into the exponential (for $h$ large) we have
$$
|hA\cap A^+| \geq \frac{|A|^h}{\exp(h^{66h}(m+1))}.
$$
\end{proof}

The following replaces Proposition 14 in \cite{CHA2003} for our case.

\begin{lemma}[Small mult. dim. implies large simple sum]\label{mdislarge} Let $B\subset K$, $|B|\geq \sqrt{n}$, and denote the multiplicative dimension of $B$ by $m$.  Then for any $\epsilon_1$, $0<\epsilon_1 < 1/132$, if

$$
m+1 \leq \left(\frac{1}{132} - \epsilon_1\right)\frac{\log\log(n)}{\log\log\log(n)}
$$
then
$$
g(B)\geq n^{\epsilon_1\frac{\log\log(n)}{\log\log\log(n)}}.
$$
\end{lemma}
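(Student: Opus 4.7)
Since $g(B) = |B^+|+|B^\times| \geq |B^+|$, it suffices to prove $|B^+| \geq n^{\epsilon_1 T}$, where $T := \log\log(n)/\log\log\log(n)$. The plan is to invoke Lemma~\ref{simpsum} for a carefully chosen iteration parameter $h$. Using $|hB \cap B^+| \leq |B^+|$ together with $|B|\geq \sqrt n$, that lemma yields
\begin{equation*}
\log|B^+| \;\geq\; \frac{h}{2}\log n \;-\; h^{66h}(m+1),
\end{equation*}
and the task reduces to choosing $h$ so that the main term comfortably exceeds $\epsilon_1 T\log n$ while the error term is $o(T\log n)$.

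I would take $h := \lfloor cT\rfloor$ for a constant $c$ to be determined. A short asymptotic computation gives $\log h = (1+o(1))\log\log\log n$, so $66h\log h = (66c + o(1))\log\log n$, and therefore $h^{66h} = (\log n)^{66c + o(1)}$. Combining with the hypothesis $m+1 \leq (1/132 - \epsilon_1)T$ bounds the error term by $(1/132)\,T\,(\log n)^{66c + o(1)}$, which is $o(T\log n)$ precisely when $66c < 1$. On the other hand, the main term equals $(c/2)(1+o(1))\,T\log n$, which beats $\epsilon_1 T\log n$ exactly when $c > 2\epsilon_1$.

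Both requirements can be met simultaneously iff the interval $(2\epsilon_1,\,1/66)$ is nonempty, i.e., iff $\epsilon_1 < 1/132$. Fixing any such $c$ under the hypothesis of the lemma, noting that $h = \lfloor cT\rfloor \leq |B|$ for $n$ large, and invoking Lemma~\ref{simpsum}, I obtain $|B^+| \geq n^{\epsilon_1 T}$, as desired.

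The only real obstacle is this constant bookkeeping: the factor of two between $1/66$ and $1/132$ in the hypothesis on $m+1$ comes precisely from the assumption $|B|\geq \sqrt n$, which supplies only $\log|B|\geq (\log n)/2$ in the main term, while the cutoff $c < 1/66$ is dictated by the transition where $h^{66h}$ ceases to be subpolynomial in $\log n$. Any strengthening of the hypothesis $|B| \geq \sqrt n$ would correspondingly relax the constraint on $m+1$.
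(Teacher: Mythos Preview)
Your proof is correct and follows essentially the same route as the paper: both invoke Lemma~\ref{simpsum}, take logarithms, and choose $h \approx cT$ with $c$ near $1/66$. The only cosmetic difference is that the paper pushes $c$ to the endpoint $1/66$ (with a small correction, writing $h=\lfloor \tfrac{1}{66}\log\log(n^{1/2})/\log\log\log n\rfloor$) so that the error term becomes exactly $n^{(m+1)/2}$ and is then balanced against the main term, whereas you take $c$ strictly inside $(2\epsilon_1,\,1/66)$ so the error term is asymptotically negligible; both choices lead to the same constraint $\epsilon_1 < 1/132$.
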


\begin{proof}
We clearly have
$$
g(B)> |B^+| \geq |hB \cap B^+|
$$
for any $h\in\mathbb{N}$.  Applying Lemma \ref{simpsum} we have 
$$
g(B)> \frac{|B|^h}{\exp(h^{66h}(m+1))}.
$$
Now, we take $h=\lfloor \frac{1}{66}\frac{\log\log(n^{1/2})}{\log\log\log(n)}\rfloor$.  Then
$$
66h\log(h) \leq \log\log(n^{1/2})
$$
so
$$
\exp(h^{66h}(m+1)) \leq n^{(m+1)/2}.
$$
But then we have
$$
g(B) > \frac{|B|^h}{n^{(m+1)/2}} \geq n^{h/2-(m+1)/2}
$$
Recalling our condition on $m$ and our choice of $h$ we have the result.
\end{proof}

This last lemma reduces us to considering the case where all large subsets of $A$ have large multiplicative dimension,

$$
m\geq \lfloor \left(\frac{1}{132} - \epsilon_1\right)\frac{\log\log(n)}{\log\log\log(n)}\rfloor.
$$

Now, recalling Lemma \ref{multdim}, giving $\nu(A)^+$ the obvious meaning we note that 
\begin{equation}
|A^\times|=|\nu(A)^+|.
\end{equation}
To complete the proof of the theorem we follow very closely the argument given in \cite{CHA2003}.

\bigskip

We divide $A$ into sets $B_1,\dots,B_{\lfloor \sqrt{n}\rfloor}$, each with size $|B_i|\geq \sqrt{n}$, and then denote $A_s=\cup_{i=1}^s B_i$.  We let $\epsilon_2$ satisfy $0<\epsilon_2<1/2$, and we let

$$
\rho = 1+ n^{-1/2+\epsilon_2}.
$$

\bigskip

The proof now splits into a trivial case, followed by a complementary case in which we are able to effectively use the large multiplicative dimension of the $B_i$s.

First, if $|\nu(A_s\cup B_{s+1})^+|> \rho|\nu(A_s)^+|$ for every $s$, we can begin with $A_1=B_1$ and iterate, gaining a factor of $\rho$ each time, to get

$$
|\nu(A)^+|>\rho^{\lfloor \sqrt{n}\rfloor -1}|\nu(B_1)| > \rho^{\sqrt{n}-2}\sqrt{n}.
$$
Hence, using (for $x$ small) $\log(1+x)>x - x^2/2 > x/2$ we have
\begin{eqnarray}
g(A) & > & |A^\times|\ =\ |\nu(A)^+| \nonumber\\
& > & \exp((\sqrt{n}-2)\log(\rho) + \frac{1}{2}\log(n)) \nonumber\\
& > & \exp((\sqrt{n}-2)(1/2)n^{-1/2+\epsilon_2} + \frac{1}{2}\log(n))\nonumber\\
& > & \exp((1/2)n^{\epsilon_2})\nonumber
\end{eqnarray}
The last bound is clearly much better than what we are trying to prove.

\bigskip

We are therefore reduced to the case where $|\nu(A_s\cup B_{s+1})^+|\leq \rho|A_s^+|$ for some s.  Let $m=\dim_\times(B_{s+1})$, so $m\geq \lfloor \left(\frac{1}{132} - \epsilon_1\right)\frac{\log\log(n)}{\log\log\log(n)}\rfloor$.

Now, since the sets $B_i$ are disjoint, the sets $\nu(B_i)$ are as well, so that
\begin{equation}
\nu(A_s\cup B_{s+1})^+ = (\nu(A_s)\cup\nu(B_{s+1}))^+ = \nu(A_s)^+ + \nu(B_{s+1})^+.
\end{equation}  
We therefore have 
\begin{equation}
|\nu(A_s)^+ + \nu(B_{s+1})^+|\leq \rho |\nu(A_s)^+|,
\end{equation}
so by Pl{\" u}nnecke's Inequality (Theorem \ref{Plun}) for any $h\in\mathbb{N}$ we have

$$
|(h+1)\nu(B_{s+1})^+ - \nu(B_{s+1})^+| \leq \rho^{h+2}|\nu(A_s)^+|.
$$ 
But the left hand side is of course larger than 
\begin{eqnarray}
|h\cdot\nu(B_{s+1})^+| & \geq & |\nu(B_{s+1})^+[h]| \nonumber\\
& \geq & h^m. \nonumber
\end{eqnarray}
where setting $\nu(B_{s+1})=\{b_1,\dots,b_k\}$ we have defined
$$
\nu(B_{s+1})^+[h]:=\left\{\sum_{i=1}^k\epsilon_ib_i: \epsilon_i\in\{0,1,\dots, h\},\ i=1,\dots,k\right\}
$$
and in which the second inequality comes from the simple sum of a basis of $\mathbb{Z}^m$ chosen from $\nu(B_{s+1})$ (via Lemma \ref{multdim}).  We take $h=\lfloor n^{1/2-\epsilon_2}\rfloor$, so that

\begin{eqnarray}
\rho^{h+2} & \leq & (1+n^{-1/2+\epsilon_2})^{n^{1/2-\epsilon_2}+2}\nonumber\\
& < & \exp(n^{-1/2+\epsilon_2}(n^{1/2-\epsilon_2}+2))\nonumber\\
& < & e^3.\nonumber
\end{eqnarray}
Combining, we have

\begin{eqnarray}
g(A) & > & g(A_s) \nonumber\\
& > & |\nu(A_s)^+|\nonumber\\
& > & e^{-3}h^m\nonumber\\
& > & e^{-3}\lfloor n^{1/2-\epsilon_2}\rfloor^{\lfloor \left(\frac{1}{132} - \epsilon_1\right)\frac{\log\log(n)}{\log\log\log(n)}\rfloor}.\nonumber
\end{eqnarray}
This proves the proposition.

\section{Sums of Distinct Sets With Small Productset}

We now prove Theorem \ref{sumproddiff}.  Let $A,B\subset K$ be finite, with $|B|=C|A|$, and suppose that $|AB|< \alpha |A|$.  Fix intergers $k$ and $l$, and assume that 
$$
\max\left(\alpha,C,\frac{\alpha}{C}\right)\leq \frac{1}{(k+l)^{65(k+l)}} \log |A|.
$$

For $S\subset K$ finite, we will denote by $G_S$ some fixed multiplicative group in $K^\ast$ of rank $\dim_\times S$ which contains $S^\ast$ (see Lemma \ref{multdim}).

The strategy is to use the condition $|AB|<\alpha|A|$ to bound the multiplicative dimensions of $|A|$ and $|B|$ in terms of $\alpha$ and $C$.  Our main tool will be the following result of Ruzsa \cite{RUZ1994}.

\begin{theorem}[Sumsets in $\mathbb{R}^n$]\label{severaldimensions}{\rm \cite{RUZ1994}} Let $n\in\mathbb{Z}^+$, and let $X,Y\subset \mathbb{R}^n$, $|X|\leq |Y|$, and suppose $\dim(X+Y)=n$.  Then we have
\begin{equation}
|X+Y|\geq |Y| + n|X| - \frac{n(n+1)}{2}.
\end{equation}
\end{theorem}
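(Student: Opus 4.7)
The plan is to prove Theorem \ref{severaldimensions} by induction on the ambient dimension $n$. The base case $n=1$ is the classical one-dimensional inequality $|X+Y| \geq |X|+|Y|-1$, obtained by sorting both sets as $X=\{x_1<\dots<x_p\}$ and $Y=\{y_1<\dots<y_q\}$ and exhibiting the strictly increasing staircase $x_1+y_1, x_2+y_1, \dots, x_p+y_1, x_p+y_2, \dots, x_p+y_q$; this matches the theorem statement since $n(n+1)/2 = 1$.

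For the inductive step, assume the result in dimension $n-1$ and take $X, Y \subset \mathbb{R}^n$ with $\dim(X+Y) = n$ and $|X| \leq |Y|$. I would select a direction $v \in \mathbb{R}^n$ in sufficiently generic position so that the linear functional $\phi := \langle \cdot, v\rangle$ is injective on $X$, on $Y$, and on $X+Y$, and so that the orthogonal projection $\pi: \mathbb{R}^n \to v^\perp \cong \mathbb{R}^{n-1}$ satisfies $\dim \pi(X+Y) = n-1$; each of these is a dense open condition, available simultaneously precisely because $\dim(X+Y) = n$. Injectivity of $\phi$ on $X$ and $Y$ forces injectivity of $\pi$ on them, so $|\pi(X)| = |X|$ and $|\pi(Y)| = |Y|$. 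The inductive hypothesis applied to $\pi(X), \pi(Y)$ in $\mathbb{R}^{n-1}$ then yields
$$
|\pi(X+Y)| = |\pi(X)+\pi(Y)| \geq |Y| + (n-1)|X| - \tfrac{(n-1)n}{2}.
$$

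The gap between this estimate and the target $|Y| + n|X| - n(n+1)/2$ is exactly $|X| - n$, so the remaining task is to produce $|X| - n$ additional elements of $X+Y$ lying in fibers of $\pi$ that already contain another element of $X+Y$. When $|X| \leq n$ the inductive bound already implies the target, so the substantive case is $|X| > n$. I would order $X = \{x_1, \ldots, x_p\}$ by increasing $\phi$-value, let $y^* \in Y$ maximize $\phi$, and note that the sums $x_i + y^*$ occupy $p$ distinct $\pi$-fibers by the $\pi$-injectivity on $X$. The plan is then to show that for every $i$ outside the top $n$ indices, the fiber $\pi^{-1}(\pi(x_i+y^*))$ also contains a sum $x' + y'$ with $\phi(x'+y') < \phi(x_i+y^*)$, which — being distinct from $x_i + y^*$ by $\phi$-injectivity on $X+Y$ — contributes one additional element beyond $|\pi(X+Y)|$. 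Summing over such $i$ produces the desired $|X| - n$ extras.

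The main obstacle is the sharp combinatorial bookkeeping of this final step: producing exactly $|X| - n$ additional fiber collisions, rather than fewer. This per-step loss of $n$ is what telescopes to the final correction $1 + 2 + \cdots + n = n(n+1)/2$, so weakening it at any level would propagate to a worse final bound. Achieving the precise count will likely require either a joint induction on $(n, |X|)$ or an extremal-point argument identifying the $n$ "corners" of $\mathrm{conv}(X+Y)$ that genuinely resist having a partner in their $\pi$-fiber. A secondary but comparatively routine point is confirming that the generic $v$ drops dimension by exactly one at each inductive level, which follows from the hypothesis $\dim(X+Y) = n$ together with the fact that the set of "bad" directions is a proper Zariski-closed subset of the unit sphere.
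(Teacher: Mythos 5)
First, a point of reference: the paper does not prove this statement at all --- it is imported verbatim from Ruzsa \cite{RUZ1994} and used as a black box --- so there is no internal proof to compare against; your proposal can only be judged on its own terms. Judged that way, it has a genuine gap, located exactly where you flag your ``main obstacle,'' and the gap is not just an unfinished computation: the decomposition you propose cannot work. Writing $|X+Y| = |\pi(X+Y)| + E$, where $E$ is the total excess of the $\pi$-fibers over singletons, you want to bound the two terms separately, by the inductive hypothesis and by $E \geq |X|-n$ respectively. But for a generic direction $v$ the projection $\pi$ is injective on the \emph{finite} set $X+Y$ (two distinct sums collide under $\pi$ only if their difference is parallel to $v$, and there are only finitely many such directions to avoid), so $E=0$ and the claim $E\geq |X|-n$ fails whenever $|X|>n$. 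Concretely, take $n=2$ and $X=Y=\{(0,0),(1,0),(2,0),(0,1)\}$: then $|X+Y|=9$, which meets Ruzsa's bound $|Y|+2|X|-3=9$ with equality, yet for generic $v$ every fiber is a singleton, so $|\pi(X+Y)|=9$, $E=0$, and the inequality you need, $9\geq 9+(|X|-n)=11$, is false. The two quantities trade off against one another: when $\pi$ is injective on $X+Y$ the inductive bound on $|\pi(X+Y)|$ is slack by exactly the amount you are missing, and if you instead tilt $v$ to manufacture fiber collisions you typically collapse $X$ or $Y$ themselves, destroying the hypotheses $|\pi(X)|=|X|$, $|\pi(Y)|=|Y|$ that feed the induction. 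Any correct proof along these lines must couple the two counts rather than add independent lower bounds, and that coupling is the actual content of the theorem. (A smaller slip: injectivity of $\phi=\langle\cdot,v\rangle$ on $X$ does not ``force'' injectivity of $\pi$ on $X$ --- the point $x+v$ has the same $\pi$-image as $x$ but a different $\phi$-value --- though both injectivities do hold for generic $v$, so this is cosmetic.)

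So the proposal establishes only the easy reduction (base case plus generic projection) and leaves the theorem unproved; the extremal configurations are precisely those where your intermediate inequality is false with equality in the theorem, which is strong evidence that no amount of ``sharp bookkeeping'' will rescue this particular split. If you want to pursue a proof, the accounting has to be restructured --- for instance by an induction that removes extreme points of the convex hull of one set and tracks when the dimension of the sumset drops (which is where the telescoping correction $1+2+\cdots+n$ genuinely comes from), or by first compressing the sets so that fibers over a coordinate direction are intervals before projecting. Either route is a materially different argument from the one outlined here.
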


Now, suppose $0\notin A\cup B$, and let $D= \dim_\times (A\cup B)$.  Then by Lemma \ref{multdim} there is an isomorphism $\nu: G_{A\cup B} \to (\mathbb{Z}^D,+)$.  Then we can compute $\nu(A)+\nu(B)$, and set $d = \dim(\nu(A)+\nu(B)) \geq \dim_\times (AB)$.  Now, we may have $d<D$, so we cannot immediately apply Theorem \ref{severaldimensions}.  However, $\nu(A)+\nu(B)$ contains translates $\nu(A)+\beta$ and $\alpha+\nu(B)$ for $\alpha\in \nu(A),\ \beta\in \nu(B)$.  Letting $\mathbb{A}^d$ be the real affine space containing $\nu(A) + \nu(B)$, we can provide an isomorphism $\eta:\mathbb{A}^d\to \mathbb{R}^d$.  Setting $X= \eta(\nu(A)+\beta)$ and $Y=\eta(\alpha+\nu(B))$ we find that

\begin{eqnarray}
|X+Y| & = & |\eta(\nu(A)+\nu(B)+\alpha+\beta)| \nonumber\\
& = & |\nu(A)+\nu(B)+\alpha+\beta|\nonumber\\
& = & |\nu(A)+\nu(B)| \nonumber\\
& = & |AB|
\end{eqnarray}
and $\dim(X+Y)=d$.  We therefore have

\begin{corollary}\label{severalmultdim} Let $A,B$, and $d$ be as above.  Then we have
\begin{equation}
|AB|\geq |B| + d|A| - \frac{d(d+1)}{2}.
\end{equation}
\end{corollary}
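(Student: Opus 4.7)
The plan is a direct invocation of Ruzsa's inequality (Theorem \ref{severaldimensions}) applied to the sets $X$ and $Y$ constructed in the paragraph immediately preceding the corollary. The preparatory work done there effects a change of coordinates $\eta \circ \nu$ (composed with translation by $\alpha$ or $\beta$) which transports $\nu(A) + \nu(B)$ into $\mathbb{R}^d$ as a set whose affine hull has full dimension $d$, while preserving all cardinalities.

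First I would reduce to the case $|A| \leq |B|$. Since $AB = BA$ as subsets of $K$ and the quantity $d = \dim(\nu(A) + \nu(B))$ is symmetric in $A$ and $B$, this reduction costs nothing. Consequently $|X| = |A| \leq |B| = |Y|$, so the size hypothesis of Theorem \ref{severaldimensions} is satisfied.

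Next, I would record from the preceding computation the facts $|X| = |A|$, $|Y| = |B|$, $|X + Y| = |AB|$, and $\dim(X + Y) = d$. Applying Theorem \ref{severaldimensions} to $X, Y \subset \mathbb{R}^d$ then yields
$$ |AB| = |X + Y| \geq |Y| + d|X| - \frac{d(d+1)}{2} = |B| + d|A| - \frac{d(d+1)}{2}, $$
which is exactly the stated inequality.

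No step poses a substantive obstacle here. The only genuinely nontrivial move was the geometric reduction of a $d$-dimensional affine subset of $\mathbb{Z}^D$ to a full-dimensional subset of $\mathbb{R}^d$, and that was already executed in the text preceding the corollary. So the corollary itself is, in effect, a restatement of Ruzsa's theorem in the present multiplicative notation.
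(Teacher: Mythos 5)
Your proof is correct in its main body, but the opening reduction step contains a genuine error. You assert that since $AB = BA$ and $d = \dim(\nu(A)+\nu(B))$ is symmetric in $A$ and $B$, you may assume without loss of generality that $|A|\leq|B|$. But the \emph{conclusion} of the corollary, namely $|AB| \geq |B| + d|A| - \tfrac{d(d+1)}{2}$, is \emph{not} symmetric in $A$ and $B$: the coefficient $d$ sits on $|A|$ and not on $|B|$. If $|A| > |B|$, swapping the roles of $A$ and $B$ and applying Theorem~\ref{severaldimensions} yields $|AB| \geq |A| + d|B| - \tfrac{d(d+1)}{2}$, and since $(|B|+d|A|) - (|A|+d|B|) = (d-1)(|A|-|B|) > 0$ for $d \geq 2$, this is a strictly \emph{weaker} bound than the one you claim to have reduced to. So the reduction does not cost nothing; it changes the statement.

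In fact, Ruzsa's theorem inherently places the factor of $d$ on the smaller set, so the inequality as written in the corollary is only what one obtains when $|A| \leq |B|$, i.e.\ $C\geq 1$. The paper is implicitly operating in that regime here, and indeed in the very next result (Lemma~\ref{multdimbound}) the two cases are handled separately: case (a) with $C\geq 1$ uses $|AB| \geq |B| + d|A| - \tfrac{d(d+1)}{2}$, while case (b) with $C<1$ uses the swapped inequality $|AB| \geq |A| + d|B| - \tfrac{d(d+1)}{2}$. The correct reading, then, is that the corollary tacitly assumes $|A|\leq|B|$; under that hypothesis your direct invocation of Theorem~\ref{severaldimensions}, together with the identifications $|X|=|A|$, $|Y|=|B|$, $|X+Y|=|AB|$, and $\dim(X+Y)=d$ established in the preceding paragraph, is exactly right and matches the paper's (unwritten) argument. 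Simply delete the WLOG step, or replace it with an explicit note that the bound as stated requires $|A|\leq|B|$, with the symmetric variant holding otherwise.
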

Now, since $\dim(X+Y)\leq \dim(X)+\dim(Y)$, we have $|A|+|B|=|X|+|Y|\geq d+2$.  
We can now derive

\begin{lemma}\label{multdimbound} Let $A,B$, and $d$ be as above, and let $K\geq 1$ be such that $|A|+|B|\geq K(d+2)$. Let $m=\max(\dim_\times (A), \dim_\times (B))$.  Then
\begin{itemize}
\item[(a)] If $K>C\geq 1$ we have
\begin{equation}
m<\frac{\alpha - C}{1-\frac{C}{K}}
\end{equation}
\item[(b)] If $C<1$, we have
\begin{equation}
m<\frac{(\alpha - 1)}{C\left(1-\frac{1}{K}\right)}
\end{equation}
\end{itemize}
\end{lemma}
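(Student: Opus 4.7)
The plan is to use Corollary \ref{severalmultdim} to convert the upper bound $|AB|<\alpha|A|$ into an upper bound on $d=\dim(\nu(A)+\nu(B))$, and then to observe that $m\le d$ (up to an additive constant of at most $1$, which is harmless for the stated inequality), since for any fixed $b_0\in\nu(B)$ the translate $\nu(A)+b_0$ sits inside $\nu(A)+\nu(B)$, forcing the $\mathbb{R}$-linear span of $\nu(A)$ (whose dimension equals $\dim_\times A$) to lie inside the span of $\nu(A)+\nu(B)$, and symmetrically for $B$.

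In case (a), where $C\ge 1$ forces $|A|\le|B|$, Corollary \ref{severalmultdim} applies directly to yield $|AB|\ge|B|+d|A|-d(d+1)/2$. Substituting $|B|=C|A|$ and $|AB|<\alpha|A|$ rearranges to $(d-(\alpha-C))|A|<d(d+1)/2$; this is trivial when $d\le\alpha-C$, and otherwise I would insert the lower bound $|A|\ge K(d+2)/(1+C)$ from the hypothesis $|A|+|B|\ge K(d+2)$ to get
$$
d-(\alpha-C) \;<\; \frac{(1+C)d(d+1)}{2K(d+2)} \;<\; \frac{(1+C)d}{2K}.
$$
Since $C\ge 1$ implies $(1+C)/2\le C$, the right side is bounded by $Cd/K$, giving $d(1-C/K)<\alpha-C$ and hence the claimed bound. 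Case (b) is handled symmetrically: with $C<1$ (hence $|B|<|A|$) I would apply Corollary \ref{severalmultdim} after interchanging the roles of $A$ and $B$ to obtain $|AB|\ge|A|+d|B|-d(d+1)/2$, and the parallel manipulation reduces to the quadratic inequality
$$
2K(d+2)\bigl(dC-(\alpha-1)\bigr) \;<\; (1+C)d(d+1),
$$
which I would solve, after dispatching the trivial range $dC\le\alpha-1$, to extract the bound $m<(\alpha-1)/(C(1-1/K))$.

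The main obstacle I anticipate is case (b). In case (a) the elementary step $(1+C)/2\le C$ closes the estimate without loss; in case (b), however, the analogous coarse simplification $(d+1)/(d+2)<1$ only yields $d(C-(1+C)/(2K))<\alpha-1$, which is strictly weaker than the claimed $d(C-C/K)<\alpha-1$ whenever $C<1$. Recovering the sharp denominator $C(1-1/K)$ therefore requires keeping the factor $(d+1)/(d+2)$ intact and extracting the larger root of the resulting quadratic, rather than estimating it away. A secondary subtlety is the precise $m\le d$ reduction: identifying $\dim_\times A$ with the $\mathbb{R}$-linear dimension of $\nu(A)\subset\mathbb{R}^D$ and comparing it to the linear (or affine) dimension of $\nu(A)+\nu(B)$ requires care about whether $0$ lies in the relevant affine spans, potentially costing a harmless $+1$ that the statement implicitly absorbs.
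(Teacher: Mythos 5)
Your approach is the same as the paper's: apply Corollary \ref{severalmultdim}, use $m\le d$, and feed in the lower bound on $|A|$ coming from $|A|+|B|\ge K(d+2)$. Case~(a) is fine and essentially reproduces the paper's computation (the paper instead bounds $\frac{1}{2|A|}\le\frac{C}{|A|+|B|}\le\frac{C}{K(d+2)}$, using $2|B|\ge|A|+|B|$, valid since $C\ge1$; your route via $|A|\ge K(d+2)/(1+C)$ and $(1+C)/2\le C$ is equivalent).

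Case~(b) is where things genuinely go wrong, and you are right to be suspicious. Your coarse estimate gives $d\bigl(C-\tfrac{1+C}{2K}\bigr)<\alpha-1$, which for $C<1$ is strictly weaker than the claimed $d\bigl(C-\tfrac{C}{K}\bigr)<\alpha-1$. But your proposed repair — retaining $(d+1)/(d+2)$ and solving the quadratic $2K(d+2)\bigl(dC-(\alpha-1)\bigr)<(1+C)d(d+1)$ — does not recover the stated constant. Concretely, take $K=2$, $C=1/2$, $\alpha=2$: the claimed bound is $d<\frac{\alpha-1}{C(1-1/K)}=4$, yet the quadratic reduces to $d^2-3d-16<0$, whose positive root is $\approx5.77$; thus $d=4$ (and even $d=5$) is not excluded. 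So ``extracting the larger root'' is wishful — the quadratic simply isn't tight enough. It is worth noting that the paper's own proof of (b) appears to suffer from the same defect: ``substituting for $2C|A|$'' would require $2C|A|=2|B|\ge|A|+|B|\ge K(d+2)$, which is exactly the inequality that fails when $C<1$. So you have in effect caught a real slip in the paper's argument (what the inequality chain actually yields is $m<(\alpha-1)/\bigl(C-\tfrac{1+C}{2K}\bigr)$, and requires $K>\tfrac{1+C}{2C}$ for positivity), but your proposed fix is an unverified hope rather than a proof. Your secondary worry about a possible $+1$ in the $m\le d$ reduction is also reasonable — the paper asserts $d\ge\max(\dim_\times A,\dim_\times B)$ without comment, whereas only $d\ge\dim_\times A-1$ follows immediately from the affine-span argument — but that is a minor point compared with the breakdown of the estimate in (b).
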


\begin{proof}
Since $AB$ contains (multiplicative) translates of both $A$ and $B$, we see that $d\geq \max(\dim_\times (A), \dim_\times (B))$.  
\begin{itemize}
\item[(a):] If $C\geq 1$, then by Corollary \ref{severalmultdim} we have
$$
\alpha|A| > |AB| \geq |B| + d|A| - \frac{d(d+1)}{2}.
$$
Rearranging,
$$
\alpha \geq C+d-\frac{d(d+1)}{2|A|}.
$$
Now, we have $2C|A|= 2|B| > |A|+|B|$, so this gives
$$
\alpha - C \geq d-\frac{Cd(d+1)}{K(d+2)}
$$
and we see the result.
\item[(b):] If $C<1$, then
$$
\alpha|A| > |A| + d|B| - \frac{d(d+1)}{2}.
$$
Hence
$$
\frac{1}{C}(\alpha - 1) > d - \frac{d(d+1)}{2C|A|}
$$
and substituting for $2C|A|$ in the last term we have the result.
\end{itemize}
\end{proof}

The singular behaviour when $K=C$ or $K=1$ will not be a problem for the application, as the following lemma shows.

\begin{lemma}[Multiplicative bases have large productset]\label{bases} Let $d$ be a large integer.  Suppose that $X,Y\subset \mathbb{R}^d$ satisfy $|Y|=C|X|$, $C\leq \log |X|$, and $\dim(X+Y)=d$.  Set $|X|+|Y|= K(d+2)$.  Then if $K\leq \log |X|$ we have 
\begin{equation}
|X+Y|\geq |X||Y|/(2^{10}\log^2 |X|).
\end{equation}
\end{lemma}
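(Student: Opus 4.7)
The plan is to apply Ruzsa's multi-dimensional sumset bound (Theorem \ref{severaldimensions}), either directly or after a generic projection, and to supplement this with an iterated Pl\"unnecke-Ruzsa argument in a residual regime where $|Y|$ is much smaller than $|X|$.

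From $|X|+|Y|=K(d+2)$ and $K\leq\log|X|$ we have $d\geq(1+C)|X|/\log|X|-2$, so $d\geq|X|/(2\log|X|)$ for $|X|$ large. I would then split into two cases. If $d\leq\min(|X|,|Y|)-1$, Theorem \ref{severaldimensions} gives $|X+Y|\geq\max(|X|,|Y|)+d\min(|X|,|Y|)-d(d+1)/2\geq d\min(|X|,|Y|)/2$, so $|X+Y|\geq|X|\min(|X|,|Y|)/(4\log|X|)\geq|X||Y|/(4\log^2|X|)$, well above the target. Otherwise $d\geq\min(|X|,|Y|)$, and I would pick a generic projection $\pi$ onto a $(\min(|X|,|Y|)-1)$-dimensional subspace; this preserves $|X|$ and $|Y|$ and enforces $\dim(\pi X+\pi Y)=\min(|X|,|Y|)-1$. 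Applying Theorem \ref{severaldimensions} to $\pi X,\pi Y$ gives $|X+Y|\geq|\pi X+\pi Y|\geq\min(|X|,|Y|)^2/4$, which dominates $|X||Y|/(2^{10}\log^2|X|)$ provided $\min(|X|,|Y|)/\max(|X|,|Y|)\geq 1/(2^8\log^2|X|)$; this is automatic when $C\geq 1$, the ratio being $1/C\geq 1/\log|X|$.

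The only remaining case is $C<1/(2^8\log^2|X|)$, so $|Y|\ll|X|$ with $d\geq|Y|$; here Ruzsa alone yields only $|X|+|Y|^2/2$, which can fall short of the target. In this regime I would exploit the dimension of $X$. From $\dim Y\leq|Y|-1<|X|/(2\log|X|)$ and $\dim(X+Y)=d\geq|X|/\log|X|$, one obtains $\dim X\geq d-\dim Y\geq|X|/(2\log|X|)$. Picking $\dim X+1$ affinely independent points in $X$ and forming all nonnegative integer combinations of total weight $k$ gives $|kX|\geq\binom{k+\dim X}{\dim X}\geq(\dim X/k)^k$ for $k\leq\dim X$. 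Applying Theorem \ref{Plun} with $A=Y$, $B=X$, $n=k$, $m=1$, combined with the trivial $|kX|\leq|kX-X|$, yields $|kX|\leq(|X+Y|/|Y|)^{k+1}|Y|$; rearranging gives $|X+Y|\geq|kX|^{1/(k+1)}|Y|^{k/(k+1)}$.

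Taking $k=\lceil\log|X|\rceil$, the factor $|Y|^{1/(k+1)}\leq|X|^{1/\log|X|}=e$ gives $|Y|^{k/(k+1)}\geq|Y|/e$, and similarly $|kX|^{1/(k+1)}\geq(\dim X/k)^{k/(k+1)}\geq(\dim X/k)/e\geq|X|/(4e\log^2|X|)$. Combining, $|X+Y|\geq|X||Y|/(4e^2\log^2|X|)\geq|X||Y|/(2^{10}\log^2|X|)$ as required. The principal difficulty is this residual case: Theorem \ref{severaldimensions} alone cannot produce the required bound, and one must combine the dimension lower bound on $X$ with iterated Pl\"unnecke-Ruzsa to close the gap.
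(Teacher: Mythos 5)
Your argument is correct, but it takes a genuinely different route from the paper. The paper's proof is a short, self-contained piece of linear algebra: since $\dim(X\cup Y)\geq\dim(X+Y)=d$, one can choose base points $x_0,y_0$ and extract a basis of the $d$-dimensional ambient space from $(X-x_0)\cup(Y-y_0)$, with $r$ vectors coming from $X$ and $d-r$ from $Y$; assuming (by symmetry) $r\geq d/2$, it then takes an arbitrary $Y'\subset Y$ of size $\approx|Y|/(16\log|X|)\leq d/4$ and, using linear independence, picks $X'$ of size $\lfloor d/4\rfloor\geq|X|/(16\log|X|)$ from the preimages of the $x_i$ so that the difference spans of $X'$ and $Y'$ intersect trivially; then $X'+Y'$ has no coincidences and $|X+Y|\geq|X'||Y'|$ gives the bound directly. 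No appeal to Theorem \ref{severaldimensions} or Theorem \ref{Plun} is made. Your proof instead reuses the paper's heavy tools: Theorem \ref{severaldimensions} applied directly when $d\leq\min(|X|,|Y|)-1$, then after a generic projection when $d$ is larger but $C$ is not too small, and, in the residual regime $C<1/(2^8\log^2|X|)$, a quite different idea --- observing that $X$ alone is nearly full-dimensional, lower-bounding $|kX|$ by counting lattice points $\binom{k+\dim X}{\dim X}$ in a simplex, and closing with the iterated Pl\"unnecke--Ruzsa inequality (Theorem \ref{Plun}). The latter third case is the most novel deviation and also the part where Ruzsa's sumset theorem genuinely fails, so your diagnosis that it must be supplemented there is accurate. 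Both routes land on the same constant $2^{10}$; the paper's is shorter and more elementary, while yours demonstrates that the already-imported machinery (Ruzsa $+$ Pl\"unnecke) is strong enough to do the whole job, at the cost of a three-way case analysis and some care with the ``$|X|$ large'' cleanup (e.g.\ your $d\geq|X|/\log|X|$ should strictly read $d\geq|X|/\log|X|-2$, but this washes out for large $|X|$).
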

\begin{proof}  
Since $\dim(X\cup Y)\geq \dim(X+Y)$, there are linearly independent vectors $x_1,\dots, x_r$ and $y_1,\dots, y_{d-r}$ for some $1\leq r\leq d$ and points $x_0$, $y_0$ such that $x_0+x_i\in X$ for $1\leq i \leq r$ and $y_0+y_j\in Y$ for $1\leq j\leq d-r$.  

First, suppose that $r\geq d/2$.  Then let $Y'$ be any subset of $Y$ with $|Y'|= \lfloor |Y|/(16\log |X|) \rfloor$.  Note that $|Y|\geq |X|$, so $|Y'|\neq 0$, but

$$
|Y'| \leq (d+2)/8 = d/8 + 1/4 \leq d/4.
$$
Hence we may choose $X'\subset \{ x_1,\dots, x_r\}$ such that $|X'| = \lfloor d/4 \rfloor \geq |X|/(16\log |X|)$ and such that the spans of $X'$ and $Y'$ do not intersect.  Since $x+y=x'+y'$ forces $y-y'=x'-x$, it follows that

$$
|X+Y|\geq |X'+Y'| \geq (|X|/(2^5\log |X|))(|Y|/(2^5\log |X|)).
$$

If $r\leq d/2$, then $d-r\geq d/2$, and the argument is the same with the sets exchanged.
\end{proof}

Since we have assumed that $\alpha, C < \log |A|$, we can use Lemma \ref{bases} with the assumption on $|AB|$ to bound $K$ away from $C$ when $C\geq 1$, and away from $1$ when $C<1$.  To proceed we begin by noting the following analogy of \ref{tuples}.

\begin{lemma}[Cauchy-Schwartz for sumsets of distinct sets]\label{CSdistinct} Let $M$ denote the number of additive $2(k+l)$-tuples in $A^{2k}\times B^{2l}$.  Then
\begin{equation}
|kA+lB|\geq \frac{|A|^{2k}|B|^{2l}}{M}
\end{equation}
\end{lemma}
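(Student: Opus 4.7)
The plan is to mimic the proof of Lemma \ref{tuples} with a two-variable representation function that keeps track of which summands come from $A$ and which from $B$. Specifically, for each $x\in K$, set
$$r(x) = \left|\left\{(a_1,\dots,a_k,b_1,\dots,b_l)\in A^k\times B^l : \sum_{i=1}^k a_i + \sum_{j=1}^l b_j = x\right\}\right|.$$
By construction $r$ vanishes outside $kA+lB$.

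Two identities then suffice. Summing $r$ over all $x$ merely re-enumerates $A^k\times B^l$, giving $\sum_x r(x) = |A|^k|B|^l$. The sum of squares $\sum_x r(x)^2$ counts pairs of representations of the same element, namely the number of $(2k+2l)$-tuples
$$(a_1,\dots,a_k,a_1',\dots,a_k',b_1,\dots,b_l,b_1',\dots,b_l')\in A^{2k}\times B^{2l}$$
satisfying $\sum_i a_i + \sum_j b_j = \sum_i a_i' + \sum_j b_j'$. By the definition of an additive $2(k+l)$-tuple in $A^{2k}\times B^{2l}$ (with the $A$- and $B$-coordinates split in the natural symmetric way between the two sides of the equation), this count is exactly $M$.

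Applying the Cauchy-Schwarz inequality to $r$ on the support $kA+lB$ now yields
$$|A|^{2k}|B|^{2l} = \left(\sum_{x\in kA+lB} r(x)\right)^2 \leq |kA+lB|\cdot \sum_{x\in kA+lB} r(x)^2 = |kA+lB|\cdot M,$$
and rearranging gives the claimed lower bound on $|kA+lB|$. No substantial obstacle is expected: this is the standard Cauchy-Schwarz argument already used in Lemma \ref{tuples}, modified only to keep track of the two source sets, and the only point that needs verification is that $\sum_x r(x)^2$ corresponds to the given definition of $M$ with $A$-variables and $B$-variables grouped consistently.
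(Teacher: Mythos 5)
Your proof is correct and follows essentially the same approach as the paper's: define the representation function $r(x)$ on $A^k\times B^l$, observe that $\sum_x r(x) = |A|^k|B|^l$ and $\sum_x r(x)^2 = M$, and apply Cauchy--Schwarz. In fact your write-up is slightly cleaner than the paper's, which misstates the first equality as $|A|^{2k}|B|^{2l} = \sum_x r_{kA+lB}(x)$ rather than $|A|^k|B|^l$, though the final inequality in the lemma is unaffected.
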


\begin{proof}
Let $r_{kA+lB}(x)= |\{(a_1+\dots+a_k+b_1+\dots+b_l)\in A^k\times B^l : a_1+\dots +a_k+b_1+\dots +b_l = x\}|$.  Then by the Cauchy-Schwartz inequality we have
\begin{eqnarray}
|A|^{2k}|B|^{2l} = \sum_{x\in kA+lB} r_{kA+lB}(x) & \leq & |kA+lB|^{1/2} \left(\sum_{x\in kA+lB} (r_{kA+lB}(x))^2\right)^{1/2} \nonumber\\
& = & |kA+lB|^{1/2} M^{1/2}\nonumber\\
\end{eqnarray}
\end{proof}
Using Chang's version of the induction lemma with $A\cup B$, we can immediately obtain a version of Theorem \ref{sumproddiff} provided that $C$ is absolutely bounded.  If we wish to obtain the finer control over $C$, however, we must refine the argument by proceeding through the proof of the lemma while remaining attentive to which variables lie in $A$ and which lie in $B$. 

We define $\gamma_0$, $\gamma_1$, and $D_{t,m}$ as we did in the proof of Lemma \ref{ind}.  We summarize the following additivity properties for use in the sequel.

\begin{lemma}[Additivity properties of $\gamma_0$ and $\gamma_1$]\label{gammas} For positive integers $k,l$ such that $k+l>2$ we have
\begin{itemize}
\item[(a)] $\gamma_1(k+l-1)= \gamma_0(k+l)-1$
\item[(b)] If $k,l\geq 2$ and $k,l$ are not both odd, then $\gamma_1(k+l-1) = \gamma_0(k)+\gamma_0(l) - 1$.
\item[(c)] $\gamma_0(k)+\gamma_1(l)\leq \gamma_1(k+l)$
\end{itemize}
\end{lemma}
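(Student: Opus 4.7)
The lemma is entirely arithmetic, involving only floor functions of integers, so the plan is a direct case analysis on the parities of $k$ and $l$. The one useful preliminary observation is that $\gamma_0$ and $\gamma_1$ agree whenever their argument is odd, and differ by exactly $1$ when the argument is even: $\gamma_0(t) = \lfloor t/2\rfloor$ always, while $\gamma_1(t) = \lfloor t/2\rfloor$ when $t$ is odd and $\gamma_1(t) = t/2 - 1$ when $t$ is even. All three parts come from substituting these formulas and simplifying.

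For part (a), I would split on the parity of $k+l$. If $k+l$ is even, then $k+l-1$ is odd, so $\gamma_1(k+l-1) = (k+l-2)/2$, which equals $\gamma_0(k+l)-1 = (k+l)/2 - 1$. If $k+l$ is odd, then $k+l-1$ is even, so $\gamma_1(k+l-1) = (k+l-1)/2 - 1$, matching $\gamma_0(k+l) - 1 = (k+l-1)/2 - 1$. For part (b), after invoking (a), it is enough to verify that $\gamma_0(k) + \gamma_0(l) = \gamma_0(k+l)$ whenever $k,l \geq 2$ are not both odd; this reduces to two cases (both even, or exactly one odd), in each of which both sides are easy to compute. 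The reason the hypothesis \emph{not both odd} is needed is precisely that when $k$ and $l$ are both odd, $\gamma_0(k)+\gamma_0(l) = (k+l-2)/2$ falls short of $\gamma_0(k+l) = (k+l)/2$ by $1$.

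Part (c) I would handle by running through the four parity possibilities for the ordered pair $(k,l)$: $(\mathrm{even},\mathrm{even})$, $(\mathrm{odd},\mathrm{odd})$, $(\mathrm{even},\mathrm{odd})$, $(\mathrm{odd},\mathrm{even})$. In the first three cases one gets equality $\gamma_0(k)+\gamma_1(l) = \gamma_1(k+l)$ after routine cancellation; the only case giving a strict inequality is $k$ odd and $l$ even, where the left side drops by $1$ relative to $\gamma_1(k+l)$ because $\gamma_0$ of an odd number rounds down. There is no genuine obstacle; the only thing one must be careful about is tracking which parity of the sum arises from which parities of the summands, which is really the only source of potential slip-ups in the bookkeeping.
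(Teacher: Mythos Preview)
Your proposal is correct and follows exactly the approach the paper indicates: the paper's own proof is the single line ``We may compute all three directly by separating into cases based on the parity of $k$ and $l$,'' and your case analysis carries this out in full detail, with all computations checking out.
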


\begin{proof}
We may compute all three directly by separating into cases based on the parity of $k$ and $l$.
\end{proof}
Let
$$
\mu_0(k+l)=\piecewise{C^{\lfloor l/2\rfloor}D_{k+l,m}|A|^{\gamma_0(k+l)}}{k+l\ {\rm is\ odd}}{C^{\lfloor l/2\rfloor}|A|^{\gamma_0(k+l)} + C^{\lfloor l/2\rfloor}D_{k+l,m}|A|^{\gamma_0(k+l)-1}}{k+l\ {\rm is\ even}}
$$
$$
\mu_1(k+l)=\piecewise{C^{\lfloor l/2\rfloor}|A|^{\gamma_1(k+l)} + C^{\lfloor l/2\rfloor}D_{k+l,m}|A|^{\gamma_1(k+l)-1}}{k+l\ {\rm is\ odd}}{C^{\lfloor l/2\rfloor}D_{k+l,m}|A|^{\gamma_1(k+l)}}{k+l\ {\rm is\ even}}.
$$
We can now prove:

\begin{lemma}[Small multiplicative dimension implies few solutions]\label{inddistinct} Suppose $A,B\subset K$ satisfy $A=-A, B=-B$, and $0\notin A\cup B$.  Let $m=\max(\dim_\times(A),\dim_\times(B))$.  Set $|B|=C|A|$.  For $k,l\in\mathbb{N}$, let $\sigma_0(k,l)$ denote the number of solutions to $a_1+\dots+a_k+b_1+\dots+b_l=0$ and let $\sigma_1(k,l)$ denote the number of solutions to $a_1+\dots+a_k+b_1+\dots+b_l=1$.
\begin{itemize}
\item[(a)] $\sigma_0(k,l)\leq \mu_0 (k+l)$
\item[(b)] In addition, if $k,l\geq 2$ are not both odd, then for $k+l$ odd we have
$$
\sigma_0(k,l)\leq D_{k+l,m}C^{\lfloor l/2\rfloor}|A|^{\gamma_0(k)+\gamma_0(l)-1}
$$ 
and for $k+l$ even we have
$$
\sigma_0(k,l)\ll_{k+l} C^{\lfloor l/2\rfloor}|A|^{\gamma_0(k)+\gamma_0(l)} + C^{\lfloor l/2 \rfloor}D_{k+l,m}|A|^{\gamma_0(k)+\gamma_0(l)-1}.
$$
\item[(c)] $\sigma_1(k,l)\leq \mu_1(k+l)$
\end{itemize}
\end{lemma}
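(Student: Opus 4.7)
The plan is to prove parts (a), (b), (c) by simultaneous induction on $k+l$, following the template of Lemma \ref{ind} but systematically bookkeeping the number of $A$-variables versus $B$-variables so as to extract the correct power of $C$. For the base case $k+l = 2$ I would check the three cases $(k,l)\in\{(2,0),(1,1),(0,2)\}$ directly: the zero sums $a_1+a_2=0$, $a+b=0$, $b_1+b_2=0$ admit at most $|A|$, $|A|$, and $|B|=C|A|$ solutions respectively (matching $\mu_0(2) = C^{\lfloor l/2\rfloor}(|A|+D_{2,m})$), while the unit sums are bounded by Theorem \ref{ss} applied to the rank at most $2m$ multiplicative subgroup $G\subset K^\ast$ containing $A^\ast\cup B^\ast$, together with the handful of trivial degenerate solutions.

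For the inductive step of part (c), I would split the count of $\sigma_1(k,l)$ into nondegenerate and degenerate contributions. The nondegenerate solutions lie in $G^{k+l}$, which has rank at most $(k+l)\cdot 2m$, and Theorem \ref{ss} yields the bound $\exp((6(k+l))^{3(k+l)}(2(k+l)m+1))$; for $k+l$ sufficiently large this is absorbed into $D_{k+l,m}$, the remaining small cases being verified by hand. For degenerate solutions, I stratify the vanishing subsum by the pair $(k_1,l_1)$ recording how many indices it contains from the $a$'s and from the $b$'s, with $2\le k_1+l_1\le k+l-1$. The inductive hypothesis bounds each such contribution by $\binom{k}{k_1}\binom{l}{l_1}\,\sigma_0(k_1,l_1)\,\sigma_1(k-k_1,l-l_1) \le \binom{k}{k_1}\binom{l}{l_1}\,\mu_0(k_1+l_1)\,\mu_1(k+l-k_1-l_1)$. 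I then invoke Lemma \ref{gammas}(c) to see that the $|A|$-exponents combine to at most $\gamma_1(k+l)$, and use the trivial inequality $\lfloor l_1/2\rfloor + \lfloor(l-l_1)/2\rfloor \le \lfloor l/2\rfloor$ to control the $C$-exponents, while the combinatorial $2^{k+l}$ factor is swallowed by $D_{k+l,m}$.

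For part (a), I reduce $\sigma_0(k,l)$ to a unit-sum equation in $k+l-1$ variables by fixing one variable and dividing through. The choice of which variable to fix is dictated by the parity of $l$: if $l$ is even and positive, fix $b_l\in B$ (contributing $|B|=C|A|$); if $l$ is odd, fix $a_k\in A$ (contributing only $|A|$). A parity case check on $k,l,k+l$ shows that this parity-sensitive selection produces exactly the exponent $\lfloor l/2\rfloor$ on $C$ in $\mu_0(k+l)$. When $k+l$ is even, the first summand $C^{\lfloor l/2\rfloor}|A|^{\gamma_0(k+l)}$ of $\mu_0(k+l)$ arises from the ``trivial pairing'' solutions in which the variables match up as $y_i=-y_j$, while the second summand $C^{\lfloor l/2\rfloor}D_{k+l,m}|A|^{\gamma_0(k+l)-1}$ captures the reduced equation's nondegenerate and partially degenerate contributions coming from part (c) applied at $k+l-1$.

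Part (b) is a sharper variant of (a) made available under the hypothesis $k,l\ge 2$, not both odd. The improvement comes from Lemma \ref{gammas}(b), which gives $\gamma_0(k)+\gamma_0(l)-1 = \gamma_0(k+l)-1$, strictly less than $\gamma_0(k+l)$, allowing a clean split of the original equation into an $A$-subsum and a $B$-subsum each of length at least $2$ to which one applies part (a) of the inductive hypothesis separately, saving one power of $|A|$ over the naive reduction. The principal technical obstacle throughout is the bookkeeping of $C$-exponents across the many parity cases of $k,l,k+l,k_1,l_1$: one must verify that $\lfloor l_1/2\rfloor + \lfloor(l-l_1)/2\rfloor \le \lfloor l/2\rfloor$ is tight in precisely the right cases, and that any parity-driven loss elsewhere is absorbed into $D_{k+l,m}$ without inflating the $C$-exponent. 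This is routine in principle but the proliferation of cases makes it the most error-prone part of the argument.
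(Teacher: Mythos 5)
Your overall strategy---a simultaneous induction on $k+l$ that mirrors Lemma \ref{ind} while separately tracking the number of $A$-variables and $B$-variables to extract the correct power of $C$---is the same strategy as the paper, and your treatment of part (c) (Theorem \ref{ss} for nondegenerate solutions, stratification of degenerate solutions by the pair $(k_1,l_1)$, Lemma \ref{gammas}(c) for the $|A|$-exponent, and $\lfloor l_1/2\rfloor+\lfloor(l-l_1)/2\rfloor\le\lfloor l/2\rfloor$ for the $C$-exponent) matches the paper closely. Two smaller differences: the paper disposes of the cases $k=0$ or $l=0$ at the outset by citing the single-set Lemma \ref{ind}, so its base case is just $(k,l)=(1,1)$ rather than the three pairs you list; and in part (a) the paper always fixes an $A$-variable $a_1$ and divides through by $-a_1$, so the reduced unit-sum equation has $k-1$ $A$-variables and $l$ $B$-variables. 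The $C$-exponent $\lfloor l/2\rfloor$ is then automatic because the $B$-count is unchanged, and Lemma \ref{gammas}(a) handles the $|A|$-exponent; no parity-sensitive choice of which variable to fix is needed. Your parity-driven variant can be made to work, but it is extra machinery, and your description of the leading term of $\mu_0(k+l)$ as arising from ``trivial pairing'' solutions is more heuristic than what actually happens (it propagates from the first summand of $\mu_1(k+l-1)$).

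The genuine gap is in part (b). You propose to split $a_1+\dots+a_k+b_1+\dots+b_l=0$ into an $A$-subsum and a $B$-subsum of length at least $2$ and apply part (a) of the induction to each ``separately.'' This only counts solutions in which both subsums vanish individually; it misses all solutions with $\sum a_i = s$ and $\sum b_j = -s$ for $s\ne 0$, which generically dominate. Even the product bound you would get from the restricted count, $\sigma_0(k,0)\,\sigma_0(0,l)$, has $|A|$-exponent $\gamma_0(k)+\gamma_0(l)$ rather than $\gamma_0(k)+\gamma_0(l)-1$, so the claimed ``saving of one power of $|A|$'' does not come from the split either. What the paper actually does for (b) is repeat the reduction of part (a) verbatim (fix $a_1$, divide, count the unit-sum equation in $k+l-1$ variables by the inductive hypothesis) and then invoke Lemma \ref{gammas}(b), which under the hypothesis ``$k,l\ge 2$ not both odd'' lets one rewrite the exponent $\gamma_1(k+l-1)$ as $\gamma_0(k)+\gamma_0(l)-1$. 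Lemma \ref{gammas}(b) is a bookkeeping identity, not a license to decouple the two halves of the equation.
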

\begin{proof}
We may assume that $k,l>0$ since the remaining cases are covered by the version of Lemma \ref{ind} in \cite{CHA2006}.
\smallskip

\noindent
{\bf Base Case:} When $k=1,l=1$, we see that the number of solutions to $a_1+b_1=0$ is at most $|A|$ (Each of $|A|$ choices for $a_1$ gives one possibility for $b_1$).  Meanwhile, the number of nondegenerate solutions to $a_1+b_1=1$ is at most $\exp((12^6(2m+1))$ by Theorem \ref{ss}, and the only two possible degenerate solutions are $(0,1)$ and $(1,0)$.  We have
\begin{equation}
\exp((12)^6 (2m +1))+2 \leq \exp(2^{24}(m +1)).\nonumber
\end{equation}
Hence the bound holds for $k+l=2$.

\smallskip

\noindent
{\bf Induction:} Fix $k,l\in \mathbb{Z}^+$, and suppose that all parts of the lemma have been proved for pairs $k',l'$ with $k'+l'<k+l$.  
\begin{itemize}
\item[(a):] We can rewrite $a_1+\dots+a_k+b_1+\dots+b_l=0$ as $a_2'+\dots+a_k'+b_1'+\dots+b_l'=1$, where the variables $a_i',b_j'$ are constrained to sets of size $|A|$ and $|B|$ respectively (with no change in multiplicative dimension), by fixing a value of $-a_1$, dividing through by it, and rearranging.  By the inductive hypothesis, this new equation has fewer than $\mu_1(k+l-1)$ solutions, whereby our target equation has at most
$$
|A|\mu_1(k+l-1)\leq \mu_0(k+l)
$$
by Lemma \ref{gammas} (a), as desired.
\item[(b):] Let $k,l\geq 2$, $k$ and $l$ not both odd.  We proceed exactly as above, but apply Lemma \ref{gammas} (b) to $\mu_1(k+l-1)$ to obtain the result.
\item[(c):] To count solutions to $a_1+\dots+a_k+b_1+\dots+b_l=1$, we begin by applying the Theorem \ref{ss}.  This tells us that the number of nondegenerate solutions in the entirety of the group $G_A^k\times G_B^l$ is bounded by $\exp(6(k+l)^{3(k+l)}((k+l)m+1))$.  

We use the inductive hypothesis to count degenerate solutions.  This reduces to computing, for each quadruple $k_1,l_1,k_2,l_2\in\mathbb{Z}^+$ such $k_1+k_2=k$, $l_1+l_2=l$, and $k_1+l_1 \geq 2$, the number of solutions to the pair of equations
$$
\sum_{r=1}^{k_1} a_{i_r} + \sum_{r=1}^{l_1} b_{j_r} = 0
$$
$$
\sum_{r=k_1+1}^{k} a_{i_r} + \sum_{j=l_1+1}^{l} b_{j_r} = 1
$$
where $i_1,\dots, i_k$ is some permutation of $1,\dots, k$ with $i_1<\dots<i_{k_1}$ and $i_{k_1+1}<\dots<i_k$, and $j_1\dots, j_l$ is some permutation of $1,\dots, l$ with $i_1<\dots<i_{l_1}$ and $i_{l_1+1}<\dots<i_l$.

The number of solutions is therefore bounded by

$$
2\sum_{k_1,l_1} \binom{k}{k_1}\binom{l}{l_1} \mu_0(k_1+l_1)\mu_1(k_2+l_2)
$$
where again we have used the extra factor of two to account for the nondegenerate solutions.  

Now, if $k+l$ is odd, then we either have $k_1+l_1$ even and $k_2+l_2$ odd, in which case 
$$
\mu_0(k_1+l_1)\mu_1(k_2+l_2) \leq \mu_1(k+l)
$$
by an application of Lemma \ref{gammas} (c), or else $k_1+l_1$ is odd and $k_2+l_2$ is even, in which case
$$
\mu_0(k_1+l_1)\mu_1(k_2+l_2) \leq \frac{k+l-3}{2} < \mu_1(k+l).
$$ 
We note that to combine the constants $D_{k_1+l_1,m}D_{k_2+l_2,m}$ here we have set $t=k_1+l_1$, so $k_2+l_2= k+l-t$, and have used the analysis from Lemma \ref{ind}.  The result now follows for $k+l$ odd on factoring out $\mu_1(k+l)$ and computing $\sum_{k_1,l_1} \binom{k}{k_1}\binom{l}{l_1} = 2^{k_1+l_1}$.

If $k+l$ is even, we proceed similarly, however in each decomposition $k_1+l_1$ and $k_2+l_2$ will have identical parity, leading to the loss of exponent in $\mu_1(k+l)$.

The result now follows by induction.
\end{itemize}
\end{proof}
Now, for arbitrary sets $A,B\subset K$ satisfying $|AB|<\alpha |A|$, we may apply Lemma \ref{inddistinct} to $A'=(A\cup(-A))\setminus \{0\}$ and $B'=(B\cup(-B))\setminus \{0\}$.  As in Section 2, we may then bound the number of solutions gained by adding 0 back to $A$ and $B$.  The numerics are identical to those previous, and the final result is

\begin{lemma}[Additive tuples in distinct sets]\label{addtuplesbound} Let $A,B\subset K$ satisfy $|B|=C|A|$ and $|AB| < \alpha |A|$.  Then the number of additive $2k+2l$-tuples $M$ in $A^{2k}\times B^{2l}$ satisfies
$$
M\ll_{k+l} |A|^k|B|^l + e^{((k+l)^{65(k+l)}\alpha)}|A|^{k-1}|B|^l
$$
if $C\geq 1$ and
$$
M\ll_{k+l} |A|^k|B|^l + e^{((k+l)^{65(k+l)}\frac{\alpha}{C})}|A|^{k-1}|B|^l
$$
if $C<1$.
\end{lemma}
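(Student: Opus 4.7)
The plan follows the strategy outlined in the paragraph immediately preceding the lemma: apply Lemma \ref{inddistinct}(a) to a symmetrized version of $A$ and $B$ to count additive $2(k+l)$-tuples, then use Lemmas \ref{multdimbound} and \ref{bases} to convert the resulting multiplicative-dimension factor into the exponential in $\alpha$ or $\alpha/C$ that appears in the statement.

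First I would replace $A$ and $B$ with $A' = (A\cup -A)\setminus\{0\}$ and $B' = (B\cup -B)\setminus\{0\}$; this at most doubles the sizes, increases each multiplicative dimension by at most $1$ (adjoining $-1$ as a generator), and replaces the hypothesis $|AB|<\alpha|A|$ by a comparable one for $A',B'$. Any additive $2(k+l)$-tuple in $A^{2k}\times B^{2l}$ rearranges (using $A'=-A'$, $B'=-B'$) to a zero sum in $A'^{2k}\times B'^{2l}$, and the extra solutions created by reintroducing $0$ are absorbed by the same numerical bookkeeping as at the end of Section 2. Lemma \ref{inddistinct}(a), applied with $(k',l')=(2k,2l)$ so that $k'+l'$ is even and $\lfloor l'/2\rfloor = l$, then gives
\[
\sigma_0(2k,2l) \;\leq\; \mu_0(2(k+l)) \;=\; C^l|A|^{k+l} \,+\, C^l D_{2(k+l),m}|A|^{k+l-1},
\]
which using $C^l|A|^l=|B|^l$ is exactly $|A|^k|B|^l + D_{2(k+l),m}|A|^{k-1}|B|^l$, where $m=\max(\dim_\times A,\dim_\times B)$. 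It therefore suffices to control $D_{2(k+l),m}=\exp\!\bigl((2(k+l))^{24(k+l)}(m+1)\bigr)$ by $\exp\!\bigl((k+l)^{65(k+l)}\alpha\bigr)$ (respectively $\exp\!\bigl((k+l)^{65(k+l)}\alpha/C\bigr)$).

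To bound $m$, set $|A|+|B|=K(d+2)$ with $d=\dim(\nu(A)+\nu(B))$ as in Lemma \ref{multdimbound}. The assumed bounds $\alpha,\alpha/C\leq\log|A|$ force $\alpha|A|\ll C|A|^2/\log^2|A|$ for $|A|$ large, so the conclusion of Lemma \ref{bases} fails, whence its hypothesis $K\leq\log|A|$ must fail too and $K>\log|A|$. This makes $C/K$ (when $C\geq1$) and $1/K$ (when $C<1$) negligible, so the denominators $1-C/K$ and $1-1/K$ in Lemma \ref{multdimbound} are bounded below by, say, $1/2$, yielding $m+1\ll\alpha$ or $m+1\ll\alpha/C$ respectively. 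Substituting into $D_{2(k+l),m}$ and using the elementary inequality $2^{24(k+l)}(k+l)^{24(k+l)}\ll(k+l)^{65(k+l)}$ absorbs the remaining factors into the stated exponential. The main obstacle is precisely the avoidance of the singular points $K=C$ and $K=1$ in Lemma \ref{multdimbound}; once Lemma \ref{bases} supplies $K>\log|A|$ the rest is bookkeeping of the constants from symmetrization, reintroducing $0$, and the numerical comparison above.
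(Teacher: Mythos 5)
Your proposal is correct and follows essentially the same route as the paper: symmetrize to $A'=(A\cup -A)\setminus\{0\}$, $B'=(B\cup -B)\setminus\{0\}$, apply Lemma~\ref{inddistinct}(a) with $(2k,2l)$, control the gain from reintroducing $0$ as in Section~2, and convert the multiplicative-dimension factor $D_{2(k+l),m}$ into an exponential in $\alpha$ (or $\alpha/C$) via Lemmas~\ref{multdimbound} and~\ref{bases}. The paper's own proof is only a two-sentence remark deferring to ``the numerics previous''; you have correctly reconstructed and made explicit the hidden step (using $|AB|<\alpha|A|$ together with $\alpha,\alpha/C\le\log|A|$ to force $K>\log|A|$, hence $m+1\ll\alpha$ or $\alpha/C$), which is exactly the mechanism the paper gestures at in the paragraph preceding Lemma~\ref{CSdistinct}.
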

\noindent Theorem \ref{sumproddiff} now follows by Lemma \ref{CSdistinct}.

\bibliographystyle{plain}
\bibliography{biblio}

\end{document}